\documentclass[11pt]{article}
\usepackage{setspace}
\singlespacing

\setstretch{1.4} 

\usepackage{amsthm,amsfonts,amsmath}
\usepackage{amssymb, amsthm, algorithm, algorithmic,thmtools,thm-restate,enumerate,verbatim,bm,color}
\definecolor{darkgreen}{rgb}{0.1,0.4,0.1}
\usepackage{graphicx}
\usepackage{enumerate}
\usepackage{verbatim} 
\usepackage{tikz}
\usetikzlibrary{arrows}
\usepackage[authoryear,round]{natbib}
\usepackage{algorithm}
\usepackage{algorithmic}


\allowdisplaybreaks
\usepackage{fullpage}
\hyphenation{con-tin-u-ous-time con-tin-u-ous bank-rupt-cy}
\hyphenation{Lebes-gue} \hyphenation{ter-mi-nol-o-gy}
\newcommand{\bbr}{\mathbb{R}}  

\newcommand{\C}{\mathcal{C}}

\renewcommand{\labelenumi}{\theenumi}


\newcommand{\be}{\begin{equation}}
\newcommand{\ee}{\end{equation}}
\newcommand{\bew}{\begin{equation*}}
\newcommand{\eew}{\end{equation*}}

\newcommand{\E}{\mathbb{E}}
\newcommand{\EE}{\mathbb{E}}

\renewcommand{\P}{\mathbb{P}}

\newcommand{\df }{\stackrel{\Delta}{=}}

\declaretheorem[numberwithin=section]{theorem}
\declaretheorem[sibling=theorem]{lemma}
\declaretheorem[sibling=theorem]{proposition}

\declaretheorem[sibling=theorem]{assumption}
\declaretheorem[sibling=theorem]{remark}
\declaretheorem[sibling=theorem]{definition}
\declaretheorem[sibling=theorem]{example}
\makeatletter
\def\eqalign#1{%
 \null\,\vcenter{\openup\jot\m@th
  \ialign{\strut\hfil$\displaystyle{##}$&$\displaystyle{{}##}$\hfil
      &&\hfil$\displaystyle{##}$&$\displaystyle{{}##}$\hfil\crcr#1\crcr}}\,}

\makeatother

\newenvironment{proofof}[1]{{{\noindent \em {Proof of}} #1.}}{\hfill\rule{2mm}{2mm}}

\frenchspacing

\usepackage{algorithm}
\usepackage{algorithmic}
\newcommand{\INPUT}{\item[{\bf Input:}]}
\newcommand{\OUTPUT}{\item[{\bf Output:}]}
\newcommand{\ea}[1][]{e^{-#1 \alpha}}
\newcommand{\reward}{r(X_t)}
\newcommand{\enorm}{{\tilde \pi_N}}
\newcommand{\Proj}{\Pi}

\begin{document}
\author{Mohammad Mousavi\footnote{Corresponding author. Department of Management Science \& Engineering,
Stanford University, email: {\tt mousavi@stanford.edu},
web: {\tt www.stanford.edu/$\sim$mousavi}.},\; Peter W. Glynn\footnote{Department of Management Science \& Engineering,
Stanford University, email: {\tt glynn@stanford.edu},
web: {\tt www.stanford.edu/$\sim$glynn}.}} \normalsize

\date{\today} 
\thispagestyle{empty}
\title{\Large \textbf{Shape-constrained Estimation of Value Functions  \normalsize}} \normalsize
\maketitle

\setcounter{section}{0}

%
\begin{abstract}
We present a fully nonparametric method to estimate the value function, via simulation, in the context of expected infinite-horizon discounted rewards for Markov chains. Estimating such value functions plays an important role in approximate dynamic programming. We incorporate ``soft information'' into the estimation algorithm, such as knowledge of convexity, monotonicity, or Lipchitz constants. In the presence of such information, a nonparametric estimator for the value function can be computed that is provably consistent as the simulated time horizon tends to infinity. As an application, we implement our method on price tolling agreement contracts in energy markets. 
\end{abstract}
\newpage
\section{Introduction}
This paper is concerned with the estimation, via simulation, of value
functions in the context of expected infinite horizon discounted rewards
for Markov chains. Estimating such value functions plays an important role
in approximate dynamic programming 
and applied probability in general. In
many problems of practical interest, the state space is huge or even
continuous and the value function is computationally intractable. Therefore,
we need to approximate the value function. In this work, we develop a fully
non-parametric method to estimate the value function by incorporating shape
constraints, such as knowledge of convexity, monotonicity, or Lipschitz
constants.

The most common method employed to approximate the value function is
parametric approximate dynamic programming; see \citet{powell2007approximate} and \citet{bertsekas2007dynamic}. In
this method, the user specifies an ``approximation architecture'' (i.e. a
set of basis functions) and the algorithm then produces an approximation in
the span of this basis. Selecting the basis function is essential because
an inappropriate ``approximation architecture''
might cause unsatisfactory
results, and cannot be improved by additional sampling or computational
effort.

In contrast, we are proposing a fully ``non-parametric'' method to avoid the
difficulty of  choosing a correct approximation architecture. The general
idea is to take 
advantage of shape properties of the optimal value function in
estimating the function. A variety of control problems exist on continuous
state spaces for which convexity in the value function naturally arises.
For instance, in a linear transition system, if the reward function
in each stage is convex, then the value function is convex. Inventory
models represent a well-known example of this class of problems. Singular
stochastic control (\citet {sunil}) and partially observed Markov processes
(\citet{POSMP}) are two other subclasses of problems for which the value
function is convex.  As another example, \citet{American_option} show that
the American-style option is convex for a generalized 
Black--Scholes model.

Monotonicity properties have been studied in the literature for various
problems formulated as Markov decision processes. For instance, if the
reward is monotone and the chain is stochastically monotone, the value
function is 
monotone. In \citet{papadaki2007monotonicity}, the monotonicity
of the value function is studied in the case of the multi-product batch
dispatch problem. \citet[p. 267-268]{stokey1989recursive} presents general
conditions that guarantee the value function will be monotonic in the
underlying state variable. Discussions of monotonicity appear also in
\cite{serfozo1976monotone} and \cite{topkis1998supermodularity}.

In \citet{convex_prop_ms} and \citet{convex_prop_econ}, sufficient conditions
are provided on the transition probability of a stochastic dynamic programming
problem to ensure the shape properties of the 
value function.  The goal of our
work is to exploit this type of shape property to estimate the value function.

We suggest two methods for computing an approximation of the value function
for a fixed policy. In the first method, we estimate the value function
along a path by explicitly incorporating the shape constraint. For instance,
in the case that we know the value function is convex, we consider the set
of all convex functions which is a convex cone in the space of measurable
functions. Having a sample path of the underlining process, one can reach a
noisy observation of the value function. By projecting this noisy observation
to the cone of convex functions, we achieve an estimator for the value
function. Since this method requires only one sample path of the process,
it can be used in reinforcement learning applications.

The second method is based on estimating the value function by taking
advantage of the 
fixed point property of the value function in addition to 
the
shape constraint. The value function satisfies a specific linear system
of equations. Therefore, estimating the value function is possible by
approximating the fixed point of this system of equations over the cone of
convex functions. This fixed point can be obtained by iteratively projecting
onto the cone of convex functions. The simulation results show that the
second approach has reduced variance and provides more accurate 
estimators as compared
to the first approach.

The projection onto the cone of convex functions is possible by solving
a least square 
optimization problem. This optimization problem can be
interpreted as a multi-dimensional convex regression. Convex regression is
concerned with computing the best fit of a convex function to a dataset of
$n$ 
observations;
$$Y_i=f(X_i)+\epsilon_i$$
 for $i=1,\ldots,n$. Convex regression derives a convex estimator of $f$
 by solving a least square problem. In one dimension, the theory of convex
 regression is well established; see \citet{hanson1976consistency} for
the 
consistency result and \citet{Mammen} and \citet{Groeneboom} for the 
rate
of convergence. The consistency of convex regression has been shown in
\citet{Eunji}, and in 
\citet{Seijo} in the multi-dimensional case where the
observations are independent.

To show the consistency of the estimator in our method, we extend the
results in convex regression literature to the Markov processes. Let $X=(X_t
\colon t\geq 0)$ be a positive Harris recurrent and the noise sequence be
a correlated sequence satisfying suitable technical assumptions. We show
that the estimator is converging to the projection of $f$ onto the cone of
convex functions in the Hilbert space of measurable functions. \citet{Eunji}
studied the behavior of the estimator when the model is mis-specifed so
that the function $f$ is non-convex under the much stronger assumption that
the
function $f$ is bounded. Our result relaxes this assumption.

 Recently,  \citet{hannah2011approximate, hannah2011multivariate}
 employed the notion of fitting convex functions in solving dynamic
 programming problems. The key differences between our work and
 \citet{hannah2011multivariate, hannah2011approximate} are as follows:

\begin{enumerate}[i.)]
\item Our method is fully non-parametric while their approach is
semi-parametric and required adjusting 
several parameters before fitting
a convex function or determining the prior distribution for Bayesian updating.
\item \citet{hannah2011approximate, hannah2011multivariate} used the value
iteration method which involves generating many sample paths. In 
contrast,
we are using 
single or two sample paths.
 \item It is well known 
that 
value iteration type algorithms often lead
 to errors that grow exponentially
in the problem horizon. Small local changes at each iteration can lead
to a large global error of the approximation; see Section IV of \citet
{tsitsiklis2001regression} and \citet{powell2009}.
In contrast, in our method the projection to the convex set occurs
asymptotically with respect to the stationary distribution of the underlying
Markov chain 
and has a convergence 
guarantee.
\end{enumerate}

 The literature on approximate dynamic programming (ADP) is also related to
 our work.  Some recent works in this area suggest that the performance of
 parametric ADP algorithms is improved by exploiting structural properties;
 see \citet{wang2000solving, cai2010stable, cai2012dynamic2, cai2012dynamic,
 cai2012shape, cai2013nonlinear}. In addition, \citet{powell-godfrey}
 and \citet{powell2004} consider the cases where the value functions
 are known to be convex and approximate the value function by separable,
 piecewise linear functions of one variable. In \citet{Topaloglu_monotone},
 the monotonicity of  value functions are used to approximate the value
 function where the state space is finite.

  In greater detail, we make the following contributions:
\begin{enumerate}[i.)]
\item We rigorously develop a fully non-parametric method to estimate shape
constrained value functions of multi-dimensional continuous state space
M.C. In the case that the value function is convex, the estimator can be
represented as a piecewise linear function and evaluated at each point in
linear time.
\item We extend the convex regression to the case in which explanatory
variables are sampled along a Markov chain path. Moreover, the observations
are correlated and generated along the same path.
\item We identify the behavior of the estimator in the case of
mis-specification, where the value function is not-convex.
\item We show the convergence of the estimator to the solution of the
projected Bellman equation as the length of the sample path goes to infinity,
\item We extend the non-parametric method to estimate the value functions
which are Lipschitz or monotone and convex.
\end{enumerate}
The rest of this section is organized as follows: In Section 2, we
precisely introduce the mathematical framework for our analysis. In
section 3, we describe our methods. Section 4 presents the extension of
multi-dimensional convex regression to the Markov processes and shows the
consistency of convex regression in this general framework. In Section 5,
we use the results of Section 4 to prove the convergence of our methods.
In Section 6,  we extend our methods to estimate the value functions by exploiting
 other shape structures. In Section 7, we study the efficacy of our methods by applying them to a pricing problem in energy market. 
\section{Formulation}

Let $X=(X_t\colon t\geq 0)$ be a discrete time Markov chain evolving on
a general continuous state space $\mathcal{X}$ embedded on $\bbr^d$. Each
random variable $X_t$ is measurable with respect to the Borel $\sigma$-algebra
associated with $\bbr^d$. The transition probability of the Markov chain
$P(x,B)$ represents the time-homogeneous probability that the next state
will be $X_{t+1}\in B$ given that the current state is $X_t=x$. Let $r(X_t)$
be the reward function received at time $t$, and $\ea$ be a discounting
factor with $\alpha>0$.

  The value function, which is the expected infinite horizon discounted
  reward 
for the Markov chain, is given by
$$V^*(x)= \E\left[\sum_{t=0}^\infty \ea[t] r(X_t)\Big\vert\Big.X_0=x\right].$$
According to the Markov property, we have $$V^*(x)= \E \left[r(X_t)+\ea
V^*(X_{t+1})\Big\vert\Big.X_t=x\right].$$
Define the operator $T:\mathcal{L(X)}\rightarrow \bbr$ by
$$(T \phi)(x)=\E\left[r(X_t)+\ea  \phi(X_{t+1})  \Big\vert\Big. X_t=x\right],
$$
where $\mathcal{L(X)}$ is the space of measurable functions over
$\mathcal{X}$. The operator $T$ can be considered as the Bellman operator
for a fixed policy. It is well known that $T$ is a contraction with respect
to the sup norm.
$$\|T \phi-T \phi '\|_\infty\leq \ea \|\phi-\phi'\|_\infty,$$
for every $\phi_1,\phi_2\in\mathcal{L(X)}$. Furthermore, the value function
is the unique fixed point of equation $V^*=T V^*$; see \citet[p.408] {bertsekas2007dynamic}.

Let $\pi$ be a probability measure on $\bbr^d.$ Define
$$\mathcal{L}_\pi^2=\Big\{\phi:\bbr^d\rightarrow\bbr \mbox{ such that
}\|\phi\|_\pi<\infty\Big\}$$
where
$$\|\phi\|_\pi=\biggl(\int_{x\in \bbr^d }\phi^2(x)\pi(dx)\biggr)^{1/2}.$$
Suppose 
that $\mathcal{C}$ is the set of all convex functions over $\bbr^d$
which are measurable with respect to $\pi$. Note that $\C$ is a closed convex
cone over the space of functions $\mathcal{L}_\pi^2$; see \citet{Eunji}. The
projection operator onto the cone $\C$ 
with respect to the measure $\pi$,
represented by $\Proj_\C $, is defined as
\[\Proj_\C (f)=\arg\min_{\phi\in\C}\|f-\phi\|_\pi.\\
\]
 The projection of $f$ onto the convex cone $\C$, denoted by $\bar
 \phi=\Proj_C (f)$, can be characterized by
$$ \langle f-\bar \phi,\phi-\bar\phi \rangle_\pi\leq 0
$$
for every $\phi\in \C$.
\section{Convex Value Exploration}
In this section, we suggest two different methods to approximate
the value function for a given fixed policy by incorporating the
shape constraints. Here, we first focus on the convexity as a shape
constraint. Next, we extend our methods to monotonicity and Lipschitz
constraints. In the first method, we estimate the value function by explicitly
incorporating the shape constraints. In the second one, we improve the
estimator by incorporating the shape constraint and simultaneously taking
advantage of the fact that the function satisfies a specific linear system
of equations. In the subsequent sections, we discuss the convergence of
these methods.

\subsection{Truncated Method}
Let $X=(X_t\colon t\geq 0)$ be the underlying Markov process. Consider a
single sample path of $X$. The total discounted rewards over this sample
path rise to  noisy observations of the value function at these sample
points. Fitting a convex function to these observations gives an estimation
of the value function. Since we truncate the infinite horizon discounted
reward stream to get the noisy observation at each sample point, we call
this method 
the \textit{truncated method}.

Let $X_1, X_2, \ldots, X_{2N}$ be a sample path of the Markov chain with
length $2N$. Also, assume that $R_1,\ldots,R_{2N}$ is the sequence of
corresponding rewards at sample points 
$R_i=r(X_i)$  for $1\leq i \leq
2N$. A noisy observation of $V^*(X_i)$ is thus given by
 \begin{align}
   Y_i^N=\sum _{i=j}^{2N} \ea[(j-i)] R_j \label{t-1}
 \end{align}
for $1 \leq i \leq N$. Observe that $Y_i^N=V^*(X_i)+\epsilon_i$, where
$\E[\epsilon_i|X_i]$ is close to zero if the number of sample points $N$
is sufficiently large.  We can construct an estimator of $V^*(x)$ by
projecting the noisy observations onto the cone of convex functions. The
projection is possible by fitting a convex function to the points
$(X_1,Y_1^N),\ldots,(X_N,Y_N^N)$.
Assuming $V^*(x)$ is a convex function, we use the least squares estimator
(LSE)  to project
the noisy observations onto the cone of convex functions by solving
\begin{align} \label{Proj_c}
\min_{\phi \in \C}\sum_{i=1}^N (Y_i^N-\phi(X_i))^2.
 \end{align}
Since $\C$ is an infinite-dimensional space, this minimization may appear
to be computationally intractable.
However, it turns out that this minimization can be formulated as a
finite-dimensional quadratic
program 
(QP):
\begin{equation}
\begin{aligned}
\label{Proj_Trunc}
 &\displaystyle\min_{p_i,\zeta_i} \sum_{i=1}^N (Y_i^N-p_i)^2\\
  & p_i \geq p_j +\zeta_j^T(X_i-X_j) &\mbox{for every $1\leq i,j \leq N$}.
 \end{aligned}
\end{equation}
In \citet{Eunji}, it is shown that this least square problem has a
minimizer $(p_1,\zeta_1),\ldots$ $,(p_n,\zeta_n)$, 
and any minimizer $\phi_N$
of (\ref{Proj_c}) over $\C$ satisfies $\phi_N(X_i)=p_i$. We defer more discussion of 
solving this optimization problem more efficiently to Chapter 5. We define our estimator $V_N(x)$ as
\[
            V_N(x)=\sup\Big \{\phi(x) : \phi \in \C, \phi(X_i)=p_i,
            i=1,\ldots,N\Big\}.
\]
The function $V_N$ is a convex and finite value function over the convex
hull of the points $(X_1,\ldots,X_N)$. Furthermore, it is straightforward to
show that $V_N$ is a piecewise linear convex function given by
\begin{align}
V_N(X)= \max_{1\leq i \leq N} (p_i+\zeta^T_i (X-X_i)).\label{VNT}
\end{align}
In the next section, we will show that as the sample size $N\rightarrow
\infty$, the estimator $V_N$ converges uniformly to $V^*$ over every
compact set.
\subsection{Fixed Point Projection}
In this section, we improve the previous method by taking advantage
of the fixed point property of the value function in addition to the 
shape
constraint. The rationale of the method is to 
iteratively apply the Bellman
operator $T$ and project to the cone of convex functions. This method provides
an approximation of the value function as the fixed point of the operator
$T$. First, we start with the ideal case, in which we can exactly compute
the expectation with respect to the stationary distribution 
as well as the
projection to the space of convex functions. Next, we explain a numerical
algorithm that approximately follows this ideal iteration procedure.

Here, we assume the value function belongs to the space of measurable
functions $L^2_\pi$ and is convex. In the next section, we study the
behavior of the estimator in the general case where the value function is
not convex. The value function is the fixed point of the operator $T$, so
$V^*=TV^*$. Moreover, by the convexity assumption, $V^*$ is a fixed point
of the projection operator onto the cone of convex functions, and we have
$V^*= \Proj_\C V^*$. Therefore,
$V^*$ is the fixed point of the combination of the operators $T$ and
$\Proj_\C$ and satisfies
 $$
    V^*= \Proj_{\C} T V^*.
 $$
 In the next theorem, we show the existence of such a fixed point as a
 result the of contraction of both operators $T$ and $\Pi_C$.
 \begin{theorem}\label{ideal_fixed}
 Let $\pi$ be the stationary distribution of the Markov chain $X$, and
 $r\in \mathcal{L}_\pi^2$. Then there exists a unique fixed point $\overline
 {V}\in \mathcal{L}_\pi^2$ such that
   $$
     \overline {V}= \Proj_\C T \overline {V}.
 $$
 Moreover, let $V=(V_k\ ;\  k\geq 0)$ be a sequence of functions in the
 convex closed cone $\C\subset \mathcal{L}_\pi^2$, defined by
 \begin{align}
     V_{k+1}=\Proj_\C T V_k.\label{Ideal.V.iter}
     \end{align}
  Then, we have
 $$
 \|V_{k}-\overline {V}\|_\pi \leq  \ea[k] \|V_{0}-\overline {V}\|_\pi .
 $$
 \end{theorem}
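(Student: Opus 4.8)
The plan is to recognize this statement as a direct application of the Banach fixed point theorem to the composite operator $\Proj_\C T$ on the Hilbert space $\mathcal{L}_\pi^2$: existence, uniqueness, and the geometric rate all follow at once provided I show that $\Proj_\C T$ is a contraction with modulus $\ea$ in the $\pi$-norm. Since $\mathcal{L}_\pi^2$ is complete, the contraction mapping principle applies to the whole space, and the iterates $V_{k+1}=\Proj_\C T V_k$ remain in $\C$ automatically because the range of $\Proj_\C$ is $\C$.

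The first and genuinely substantive step is to upgrade the contraction property of $T$ from the sup norm (which is all the excerpt records) to the $\pi$-norm; this is exactly where stationarity of $\pi$ is used. Writing $(T\phi)(x)=r(x)+\ea (P\phi)(x)$ with $(P\phi)(x)=\E[\phi(X_{t+1})\mid X_t=x]$, I first note that $r\in\mathcal{L}_\pi^2$ together with $\|P\phi\|_\pi\le\|\phi\|_\pi$ shows $T$ maps $\mathcal{L}_\pi^2$ into itself. For two functions the reward term cancels, so $(T\phi-T\phi')(x)=\ea\,(P(\phi-\phi'))(x)$ and it suffices to bound $P$. Setting $g=\phi-\phi'$ and applying the conditional Jensen inequality pointwise gives $(Pg)(x)^2\le \E[g(X_{t+1})^2\mid X_t=x]$; integrating against $\pi$ and invoking stationarity (so that $X_{t+1}\sim\pi$ whenever $X_t\sim\pi$) yields
$$\|Pg\|_\pi^2\le \int \E[g(X_{t+1})^2\mid X_t=x]\,\pi(dx)=\int g^2\,d\pi=\|g\|_\pi^2,$$
and hence $\|T\phi-T\phi'\|_\pi\le \ea\,\|\phi-\phi'\|_\pi$.

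The second step is the nonexpansiveness of the projection, which is standard Hilbert-space theory and follows from the variational characterization $\langle f-\bar\phi,\phi-\bar\phi\rangle_\pi\le 0$ stated above: applying it to two arbitrary points $u,v$ and their projections, then adding the two inequalities and using Cauchy--Schwarz, gives $\|\Proj_\C u-\Proj_\C v\|_\pi\le\|u-v\|_\pi$. Composing the two steps yields
$$\|\Proj_\C T\phi-\Proj_\C T\phi'\|_\pi\le \|T\phi-T\phi'\|_\pi\le \ea\,\|\phi-\phi'\|_\pi,$$
so $\Proj_\C T$ is an $\ea$-contraction on $\mathcal{L}_\pi^2$. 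Banach's theorem then produces a unique $\overline V$ with $\overline V=\Proj_\C T\overline V$, and since $\overline V$ lies in the range of $\Proj_\C$ it belongs to $\C$. Finally, using $\overline V=\Proj_\C T\overline V$ and the contraction bound along the sequence gives $\|V_k-\overline V\|_\pi\le \ea\,\|V_{k-1}-\overline V\|_\pi$, and a one-line induction delivers $\|V_k-\overline V\|_\pi\le \ea[k]\,\|V_0-\overline V\|_\pi$.

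I expect the one nontrivial point to be the $\pi$-norm contraction of $T$: the sup-norm contraction quoted in the text does not transfer for free to $L^2(\pi)$, and the argument hinges on combining conditional Jensen with the stationarity of $\pi$ to control $P$. Everything else — completeness of $\mathcal{L}_\pi^2$, nonexpansiveness of the projection, and the iteration — is routine.
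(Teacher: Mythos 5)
Your proposal is correct and follows essentially the same route as the paper: establish that $T$ is an $\ea$-contraction in $\|\cdot\|_\pi$ via conditional Jensen plus stationarity of $\pi$, combine with nonexpansiveness of $\Proj_\C$ on the Hilbert space $\mathcal{L}_\pi^2$, and invoke the Banach fixed point theorem. The only cosmetic difference is that you derive the nonexpansiveness of the projection from the variational inequality, whereas the paper simply cites it.
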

  \begin{remark}
  The sequence $V=(V_k\colon  k\geq 0)$ generated in (\ref{Ideal.V.iter})
  does not converge for an arbitrary norm $\|\cdot\|_\pi$. The assumption that
  $\pi$ is the stationary distribution of the underlying Markov chain $X$
  is essential to guarantee the convergence of the sequence. For instance,
  the projection with respect to the sup-norm is not contraction (see Example
  \ref{contour}). For a similar discussion in the context of parametric ADP,
  see \citet{tsitsiklis2001regression}.
  \end{remark}
  \begin{proof}
  First, we show that if $\pi$ is the stationary distribution of the Markov
  chain, then the operator $T$ is a contraction with respect to the norm
  $\|\cdot\|_\pi$. For any two functions $\phi_1,\phi_2 \in \mathcal{L}_\pi^2$, 
  we 
have
  \begin{align*}
 \E_\pi  (T \phi_1- T \phi_2 )^2
  ={}&\E_\pi \Big(\E [\reward+\ea \phi_1(X_{t+1})|X_t] \cr
  &-\E [\reward+\ea  \phi_2(X_{t+1})|X_t]\Big)^2\cr
 ={}&\ea[2]E_\pi \Big (\E [\phi_1(X_{t+1})-\phi_2(X_{t+1})|X_t]\Big)^2\cr
 \leq{}&\ea[2]E_\pi \Big ( \phi_1(X_{t+1})-\phi_2(X_{t+1})\Big)^2\cr
 ={}&\ea[2] \|\phi_1-\phi_2\|_\pi^2.
   \end{align*}
  Moreover, we know that $\Proj_\C$, the projection operator onto the convex
  cone $\C$, is also a contraction with respect to $\|\cdot\|_\pi$ norm;
  see P.26 \citet{convex}.
 More precisely, if $\phi_1,\phi_2 \in \mathcal{L}^2_{\pi}(X)$, then we have
$$\|\Pi_\C \phi_1 -\Pi_\C \phi_2\|_\pi\leq \|\phi_1 -\phi_2 \|_\pi.$$
Note that
$$\E_\pi \Big (r(X_t)+ \ea \E [\phi(X_{t+1})|X_t]\Big )^2 \leq 2\E_\pi
r(X_t)^2 +2\ea[2] \E_\pi \phi(X_{t+1})^2 < \infty.$$
Thus, $T\phi\in \mathcal{L}_\pi^2$ for every $\phi \in \mathcal{L}_\pi^2$.
Therefore,
  $$
     \|\Proj _\C T \phi_1- \Proj_\C T \phi_2\|_\pi \leq \ea
     \|\phi_1-\phi_2\|_\pi.
  $$
  The rest of the theorem follows directly from the Banach fixed point
  theorem.
  \end{proof}
In the rest of this section, we develop a computational method to approximate
the fixed point over the 
cone $\C$ by using simulated trajectories.
Exact computation of  $TV$ is not generally viable. Evaluating $TV(x)$
at any $x\in \mathcal{X}$ involves the computation of the expectation
$E[V(X_{t+1})\mid X_t=x].$ This expectation is
over a potentially high-dimensional or infinite-dimensional space and hence
can pose a computational challenge.  The following proposition provides an
equivalent characterization to the operator $\Proj_\C T$. As a result of
this proposition, it suffices to evaluate $V(\cdot)$ at two sample points
rather than computing $E[V(X_{t+1})\mid X_t=x].$
\begin{proposition} \label{p_2copy} Let  $X_{t+1}$ and $\widetilde X_{t+1}$
be two independent samples of an M.C. at time $t+1$ given $X_t$. Moreover,
define the random variable $H_t$ such that
\begin{align}\label{H-def}
H_t= r(X_t)+\frac \ea 2  \left(V(X_{t+1})+V(\widetilde X_{t+1})\right).
\end{align}
 For every measurable function $V\in \mathcal{L}^2_\pi$, we have
   \[
             \Proj_\C TV=\arg\min_{\phi \in \C}\E_\pi \Big( H_t- \phi(X_t)
             \Big)^2.
    \]
    Therefore, $\Proj_\C TV=\Proj_\C H$.
    \end{proposition}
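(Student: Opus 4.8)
The plan is to recognize that $H_t$ is a conditionally unbiased Monte Carlo estimate of $(TV)(X_t)$, and then to peel off an orthogonal (bias–variance) decomposition of the least-squares objective so that the noise term drops out of the minimization. The role of the two independent copies $X_{t+1}$ and $\widetilde X_{t+1}$ is precisely to produce an unbiased surrogate for the conditional expectation $\E[V(X_{t+1})\mid X_t]$ without ever computing it.

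First I would compute $\E[H_t\mid X_t]$. Since $X_{t+1}$ and $\widetilde X_{t+1}$ are independent and identically distributed given $X_t$, each drawn from the transition law $P(X_t,\cdot)$, we have $\E[V(X_{t+1})\mid X_t]=\E[V(\widetilde X_{t+1})\mid X_t]$, so
\[
\E\Bigl[\tfrac{\ea}{2}\bigl(V(X_{t+1})+V(\widetilde X_{t+1})\bigr)\,\Big|\,X_t\Bigr]=\ea\,\E[V(X_{t+1})\mid X_t],
\]
and therefore $\E[H_t\mid X_t]=\reward+\ea\,\E[V(X_{t+1})\mid X_t]=(TV)(X_t)$. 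Thus the residual $H_t-(TV)(X_t)$ has conditional mean zero given $X_t$. Next, for any $\phi\in\C$ I would expand
\begin{align*}
\E_\pi\bigl(H_t-\phi(X_t)\bigr)^2
={}& \E_\pi\bigl(H_t-(TV)(X_t)\bigr)^2
+ \E_\pi\bigl((TV)(X_t)-\phi(X_t)\bigr)^2 \\
&{}+ 2\,\E_\pi\Bigl[\bigl(H_t-(TV)(X_t)\bigr)\bigl((TV)(X_t)-\phi(X_t)\bigr)\Bigr].
\end{align*}
The cross term vanishes by the tower property: conditioning on $X_t$, the factor $(TV)(X_t)-\phi(X_t)$ is $X_t$-measurable, while $\E[H_t-(TV)(X_t)\mid X_t]=0$. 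The first term is a constant in $\phi$, so minimizing over $\C$ is equivalent to minimizing $\E_\pi\bigl((TV)(X_t)-\phi(X_t)\bigr)^2=\|TV-\phi\|_\pi^2$, whose unique minimizer over the closed convex cone $\C$ is by definition $\Proj_\C TV$. This yields the claimed identity, with $\Proj_\C H$ understood as the least-squares projection of the response $H$ onto $\C$.

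The one place requiring care is integrability: all three terms above must be finite for the decomposition to be legitimate. I would verify $H_t\in\mathcal{L}^2_\pi$ using $r\in\mathcal{L}^2_\pi$, the assumption $V\in\mathcal{L}^2_\pi$, and the bound $\E_\pi\bigl(\reward+\ea\E[V(X_{t+1})\mid X_t]\bigr)^2\le 2\E_\pi r(X_t)^2+2\ea[2]\E_\pi V(X_{t+1})^2<\infty$ already established for $TV\in\mathcal{L}^2_\pi$ in the proof of Theorem~\ref{ideal_fixed}; a crossterm of the form $\E_\pi[V(X_{t+1})V(\widetilde X_{t+1})]$ is controlled by Cauchy--Schwarz together with stationarity of $\pi$. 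Beyond this bookkeeping the argument is routine, so the integrability check is the only genuine obstacle; the heart of the result is simply the conditional unbiasedness $\E[H_t\mid X_t]=(TV)(X_t)$ supplied by the two independent samples.
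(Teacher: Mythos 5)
Your proof is correct, but it takes a genuinely different route from the paper's. You decompose $\E_\pi\bigl(H_t-\phi(X_t)\bigr)^2$ orthogonally around the conditional mean $\E[H_t\mid X_t]=(TV)(X_t)$, kill the cross term by the tower property, and conclude that the objective equals $\|TV-\phi\|_\pi^2$ plus a constant in $\phi$. The paper instead starts from $\E_\pi\bigl(TV(X_t)-\phi(X_t)\bigr)^2$, writes the square of the conditional expectation as a product of two conditional expectations built from the independent copies $X_{t+1}$ and $\widetilde X_{t+1}$, pulls the conditioning out using conditional independence, and then applies the identity $ab=\bigl(\tfrac{a+b}{2}\bigr)^2-\bigl(\tfrac{a-b}{2}\bigr)^2$ to land exactly on $\E_\pi\bigl(H_t-\phi(X_t)\bigr)^2-\tfrac{\ea[2]}{4}\E_\pi\bigl(V(\widetilde X_{t+1})-V(X_{t+1})\bigr)^2$. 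The two arguments establish the same identity read in opposite directions. Yours is the standard ``the regression function minimizes mean squared error'' argument and, notably, uses only that $H_t$ is a conditionally unbiased, square-integrable surrogate for $(TV)(X_t)$; it would prove the proposition equally well for the single-copy surrogate $r(X_t)+\ea V(X_{t+1})$, which makes clear that the second copy is not logically needed here (it serves variance reduction, and it is genuinely essential only for the Bellman-residual objective (\ref{UBOP}) discussed afterward, where $\phi$ itself appears inside the residual so the offset would depend on $\phi$). The paper's product-of-copies computation buys an explicit formula for the constant offset, which makes that later connection transparent. Your integrability remarks are the right ones and suffice to legitimize the decomposition.
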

    \begin{proof}Let $\bar \phi$ be the projection of $TV$ onto the cone
    of convex functions $\C$, which is the minimizer of
        \[\min_{\phi \in \C}\E_\pi (TV - \phi )^2.
    \]
By using the independence of $X_{t+1}$ and $\widetilde X_{t+1}$ given $X_t$,
we obtain
    \begin{align}
   \E_\pi (TV - \phi )^2={}&\E_\pi \Big(E [r(X_t)+\ea
   V(X_{t+1})|X_t]-\phi(X_t)\Big)^2\nonumber\\
    ={}&\E_\pi\Big[ \Big(E [r(X_t)+\ea V(X_{t+1})-\phi(X_t)|X_t]\Big)\\
   &\times \Big(E [r(X_t)+\ea V(\widetilde
   X_{t+1})-\phi(X_t)|X_t]\Big)\Big]\nonumber\\
    ={}&\E_\pi \Big[\Big(r(X_t)+\ea V(X_{t+1})-\phi(X_t)\Big)\nonumber\\
      &\times \Big(r(X_t)+\ea V(\widetilde X_{t+1})-\phi(X_t)\Big)\Big]\nonumber\\
    ={}&\E_\pi\Big[ \Big(r(X_t)+\frac \ea 2  (V(X_{t+1})+V(\widetilde
    X_{t+1}))-\phi(X_t)\Big)^2\nonumber\\
    & - \frac {\ea[2]} 4 \Big( V(\widetilde
    X_{t+1})-V(X_{t+1})\Big)^2\Big]\label{observe1}
    \end{align}
    for every function $\phi \in \mathcal{L}^2_\pi$. Therefore, we can
    conclude that $\bar \phi$ is also the minimizer of the 
    optimization
    \[
    \bar \phi=\arg\min_{\phi \in \C}\E_\pi \left[r(X_t)+\frac \ea 2
    \left(V(X_{t+1})+V(\widetilde X_{t+1}))- \phi(X_t) \right)^2\right]. 
    \]
    \end{proof}
By using the ergodic property of the Markov chains, it is straightforward
to calculate an estimator of $\E_\pi (H_t-\phi_t)^2$. At each time
step $t=1,\ldots,N$, we generate two independent copies $X_{t+1}$ and
$\widetilde X_{t+1}$ given $X_t$. We call 
$(X_1,X_2,\widetilde
X_2,\ldots,X_{N+1}, \widetilde X_{N+1})$ a 
\textit{``two copy sample path''}.
 \begin{figure}[htbp]
\begin{center}
     \caption{\small A \textit{two copy sample path} of length $4$}
     \hspace{4mm}

     \begin{tikzpicture}[->,>=stealth',shorten >=1pt,auto,node distance=2cm,
  thick,main
  node/.style={circle,fill=blue!20,draw,font=\sffamily\small\bfseries}]

  \node[main node] (0) {$X_1$};
   \node[main node] (11) [right of=0] {$X_2$};
  \node[main node] (12) [below of=11] {$\widetilde X_2$};

     \node[main node] (21) [right of=11] {$X_3$};
  \node[main node] (22) [below of=21] {$\widetilde X_3$};

     \node[main node] (31) [right of=21] {$X_4$};
  \node[main node] (32) [below of=31] {$\widetilde X_4$};

   \path[every node/.style={font=\sffamily\small}]

   (0) edge node [left] {} (11)
   edge node [left] {} (12)
    (11) edge node [left] {} (21)
   edge node [left] {} (22)

         (21) edge node [left] {} (31)
   edge node [left] {} (32);
\end{tikzpicture}

\label{2copy}
\end{center}
\end{figure}
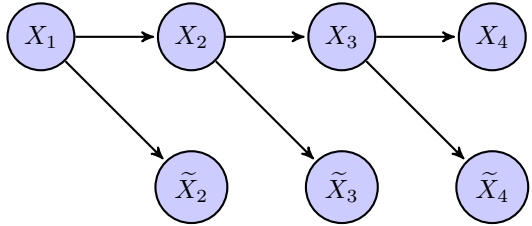

    Under appropriate conditions over the process $X$, we have
    \[
   \frac {1}{N} \sum_{t=1}^N (H_t-\phi(X_t))^2\rightarrow \E_\pi
   (H_t-\phi(X_t))^2
    \]
    as $N\rightarrow \infty$.

Here, we discuss a potential but unsuccessful Monte Carlo  approach to
approximate $V^*$. By the convexity assumption, the value function is the
fixed point of $V^*=\Pi_C T V^*$, and therefore  is the minimizer of the
optimization problem
$$\min_{\phi \in \C}\|\phi-\Proj_\C T  \phi\|_\pi^2.$$
Similar to Proposition \ref{p_2copy}, it is possible to show that the fixed
point $V^*$ is also the minimizer of
$$\min_{\phi \in \C}\E_\pi \Big [\Big( r(X_t)+ \ea \phi(X_{t+1})-
\phi(X_t)\Big)\Big(r(X_t)+ \ea \phi (\widetilde X_{t+1})-
\phi(X_t)\Big)\Big].$$
One might  solve the optimization problem
 \begin{align}
 \min_{\phi \in \C}\frac{1}{N}\sum_{t=1}^N  \Big( r(X_t)+ \ea
 \phi(X_{t+1})- \phi(X_t)\Big)\Big(r(X_t)+ \ea \phi (\widetilde X_{t+1})-
 \phi(X_t)\Big).\label{UBOP}
\end{align}
However, it  can be easily shown that this optimization problem is non-convex
and unbounded for any finite sample path of length $N$; see Example
\ref{unbounded}. We can solve this difficulty by employing an iterative
projection procedure. Before discussing this method, we impose an additional
shape constraint to bound the value function. This assumption helps to
restrict the cone of convex functions and make the projection more tractable.

\begin{assumption} \label{A-bound} Let the state space $\mathcal{X}$ be
bounded. Moreover, assume that for every $x\in \mathcal{X}$, the sub-gradient
of $V^*$ is bounded by a constant $K$:
\[
 \|\nabla V(x)\|_{\infty}<K,
 \] and $V(0)>-K$.
\end{assumption}
\begin{example} Suppose that there exists a constant $K$ such that for
every state $x\in X$ 
we have
\[
\Big|r(x)-E[r(X_1)|X_0=x]\Big|\leq K.
\]
It 
is straightforward to show that
\[
\Big|V^*(x)-\frac{r(x)}{1-\ea}\Big| \leq \frac {K}{(1-\ea)^2}.
\]
Therefore, if the reward function is bounded over the state space, then
Assumption 
(\ref{A-bound}) holds.
\end{example}
Now, we present an alternative method to estimate the value function by
using convexity and the 
fixed point property. The method is similar to the
ideal procedure in Theorem \ref{ideal_fixed}. The main difference is
using the random vector $\hat H^k=(\hat H_1^k,\ldots,\hat H_N^k)$ for a
piecewise linear function $\hat V_k(\cdot)$ instead of $T\hat V_k$. We first
generate a  \textit{two copy sample path} of length $N$. This sample path
does not change throughout the procedure. 
We 
iteratively compute the
random vector $\hat H^k=(\hat H_1^k,\ldots,\hat H_N^k)$ for a piecewise
linear function $\hat V(\cdot)$.  Next, we project $\hat H^k$ onto the
convex cone $\C$ to achieve $\hat V_{k+1}$. Each convex projection is a
least square finite-dimensional 
optimization problem. By following this
procedure iteratively, an estimation of the fixed point over the cone $\C$
is obtained. The details of the method are as follows:
\renewcommand{\thealgorithm}{}
\begin{algorithm}[htb]
\begin{algorithmic}[Fixed Point Projection]\label{FPI}
\INPUT $N$ and $\epsilon$
\OUTPUT The estimator of the value function $\hat V_{k}$. \\
\vspace{5 mm}
\STATE {\bf Initialize:} Select a piecewise-linear function $V_0(x)$,
and  set $k=0$, $V_{-1}(x)=0$.
\vspace{2 mm}
\STATE {\bf Generating Sample Path:} Generate a ``two copy sample path'' of
length $N + 1$.
\vspace{2 mm}
\WHILE {$ \|\hat V_k -\hat V_{k-1}\|_{\tilde{\pi}_N}>\epsilon$}
\STATE
\begin{enumerate}[i.)]
	\item  Compute $(H_t^k)_{t=1}^N$ from  (\ref{forward}).
	\item  Project $(H_t^k)_{t=1}^N$ by solving the optimization problem
	(\ref{approx_opt}), and find $(p_i,\zeta_i)$ for $i=1\ldots, N$.
	\item  Update $\hat V_{k+1}(\cdot)$ thorough (\ref {proj_approx}),
	and $k\gets k+1$.
\end{enumerate}
\ENDWHILE
\RETURN  The piecewise-linear function $\hat V_k$.
\end{algorithmic}
\caption{Fixed Point Projection}
\end{algorithm}

\hspace{2 in}
\begin{description}
\item[Generating Sample Path:] Generate a \textit{``two copy sample path"}
of length $N+1$. At each time step $t=1,\ldots,N$, generate two independent
copies $X_{t+1}$ and $\widetilde X_{t+1}$ given $X_t$.

\item[Updating Step:] Evaluate
\begin{align}
\hat H_t^k=\reward+\frac{\ea}{2} (\hat V_k(X_{t+1})+\hat V_k(\widetilde
X_{t+1}))\label{forward}
\end{align}
 for 
every $t=1,\ldots,N$. The sequence $\hat H^K=(\hat H_1^k,\ldots,\hat
 H_{N}^k)$ is a noisy observation of $T\hat V_k(X_t)$.
\item[Projection:] Project $\hat H^k$ onto the cone of convex functions by
solving 
the 
finite-dimensional convex 
program
\begin{align}\label{approx_opt}
\min \frac{1}{N} & \sum_{t=0}^{N-1}  (\hat H_t^k-p_t)^2 \\
& p_i \geq p_j +\zeta_j^T(X_i-X_j) \mbox  { \ \ \ \  for every $1\leq
i,j\leq N$ }\nonumber\\
&{-K}\leq\zeta_j^l\leq K\mbox  { \ \ \ \	for every $i=1,\ldots,N$, and
$l=1,\ldots,d$ }\nonumber\\
&{-K}\leq p_j +\zeta_j^TX_i.\nonumber
\end{align}
Given 
the optimal solution $(p_t,\zeta_t,X_t)_{t=1}^N$ to this optimization,
we can construct a piecewise linear convex function. Define
\begin{align}\label{proj_approx}
 \hat V_{k+1}(x)=\max_{0\leq i \leq N} (p_i+\zeta_i ^T (x-X_i)).
\end{align}

\end{description}
 The updating and projection stages for a fixed \textit{``two copy sample
 path"} should be continued until a desired level of accuracy is reached. We
 can consider $\hat V_{k}(x)$ as an estimator for the value function. In
 the next section, we will show that for sufficiently large sample size
 $N$ and a large number of iterations $k$, the estimator $\hat V_{k}(x)$
 converges uniformly to the value function $V^*(x)$ over every compact set.\\

\section{Empirical Projection Consistency}
 In this section we describe a generalization of the consistency result of
 convex regression in \citet{Eunji} to the positive Harris chains. Our result
 includes the model mis-specification case without any extra assumption to
 bound the function. In the next section, we use this result to show that the
 estimators in \textit{truncated method} and \textit{fixed point projection
 method} converge to the value function as the sample size grows to infinity.

 Let $X=(X_t\colon t\geq 1) $ be defined on the probability space
 $(\Omega,\mathcal{F},\mathbb{P})$. For every  $\mathcal{F}$-measurable
 random variable $Y$, we can define the projection onto the cone $\C$
 with respect to the norm $\pi$ as the solution of
\[
 \min_{\phi\in \C} \E_\pi(Y-\phi(X))^2.
 \]
 Let $(Y^N _i \colon 1\leq i\leq N,  1\leq N)$ be a sequence of random
 vectors in which $Y^N=(Y_1^N,\ldots,Y_N^N)$ for every $N\geq 1$. We show
 that if $(Y^N \colon N\geq 1)$  \textit{converges on average} to $Y$, then
 the empirical projection of this sequence onto the cone of convex functions
 gives a consistent estimate 
of projecting $Y$ onto this cone. For ease
 of exposition, we define a sequence of random vectors as \textit{strongly
 ergodic} in the following way:
\begin{definition}Suppose that $X=(X_t: t\geq 1) $ is a positive 
Harris
chain with stationary distribution $\pi$, and $(Y^N_t \colon 1\leq N,
1\leq t\leq N  )$ be a sequence of random variables. We call this sequence
\textit{``strongly ergodic''} if there exists a 
$\mathcal{F}$-measurable
random variable $Y$ such that 
$\E Y^2<\infty$, 
\[
\frac{1}{N} \sum_{t=1}^N (Y_t^N-g(X_t))^2I(\|X_t\|\leq
c)\rightarrow\E_\pi(Y-g(X_t))^2I(\|X\|\leq c)\mbox{\ \ \ \ \ \ \ \ \  a.s.}
\]
for every function $g\in \mathcal{L}_\pi^2$, and $c\leq \infty$.
\end{definition}
To illustrate this definition, we provide several examples.
\begin{example} Let the $Y_t^N=f(X_t)+\nu_t$ be such that $\nu_t$
is a sequence of i.i.d noise terms with respect to $X$ such that
$\E\nu_t^2<\infty$, and $f$ is a convex function in $\C$.  Then by the
strong law of large numbers, we obtain that $(Y^N \colon N\geq 1)$ is
\textit{``strongly ergodic''}.
\end{example}
\begin{example}In Lemma \ref{truncated}, we show that if $E_\pi
r(X_t)^2<\infty$, then the two following random sequences are
\textit{``strongly ergodic''}:
\begin{align*}Y_t^N= \sum_{j=t}^\infty \ea[(j-t)] r(X_{j}),\\
\overline Y_t^N= \sum_{j=t}^{2N} \ea[(j-t)] r(X_{j}).
\end{align*}
\end{example}
\begin{example} Assume that $(X_t,\widetilde X_t)$ is a ``two copy sample
path'' of a Harris recurrent chain.  Let  $V \in \mathcal{L}_\pi^2$ and as
we defined in (\ref{forward}),
\[
  H_t^N=\reward+\frac{1}{2} (V(X_{t+1})+V(\widetilde X_{t+1})).
\]
Then, $(H^N\colon N\geq 1)$ is a \textit{``strongly ergodic''}	sequence;
see Lemma \ref{two-ergodic}.
\end{example}
Let $g_N$ be the optimizer of the 
convex optimization problem
\begin{align}\label{approx_projection}
\min_{ g \in \C} \frac{1}{N} & \sum_{t=1}^N  (Y_t^N-g(X_t)^2)
\end{align}
for $N\geq 1$. Note that similar to (\ref{approx_opt}), we can convert this
optimization problem to  a finite quadratic convex problem. In the following
theorem, we show that $g_N$ is an estimator for $g^*$, 
the projection of $Y$
onto the space of convex functions.

We need some assumptions over the structure of the Markov chain.
   \begin{assumption}\label{A3} For the Markov chain $X$, we have:
       \begin{enumerate} [i.)]
	       \item It is positive Harris recurrent with unique stationary
	       distribution $\pi$.
	      \item $\pi (B) >0$ for every positive radius ball $B$ which
	      is a subset of state space $\mathcal{X}$.
	      \item  $E_{\pi}  X_t^2<\infty$.
     \end{enumerate}
\end{assumption}

    \begin{theorem}\label{consistance}
     Assume that $(Y^N \colon N\geq 1)$ is \textit{``strongly ergodic''},
     and 
Assumption (\ref{A3}) holds. Let $g_N$ be the solution of
     (\ref{approx_projection}) and $g^* \in \C$ be the unique minimizer of
     \[
	 \min_ {g \in \C}\E_\pi (Y- g(X))^2.
     \]
     Then,
    \[
     \frac{1}{N} \sum_{t=1}^N(g_N(X_t)-g^*(X_t))^2 \rightarrow 0 \mbox {\
     \ \ \ \  \ \ a.s.}
     \]
     as $N\rightarrow \infty$.
    Moreover,
   $$ \sup_{\|x\|\leq c}  |\hat g_n(x)-g^*(x)|\rightarrow 0\mbox {\ \ \ \
   \  \ \ a.s.}
   $$
   as  $N\rightarrow \infty$, for every $c>0$.
    \end{theorem}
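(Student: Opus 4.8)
The plan is to treat the strongly ergodic hypothesis as the replacement for the strong law of large numbers used in the i.i.d.\ convex-regression theory, and to run a compactness argument in the space of convex functions under uniform convergence on compacta, reducing everything to one local boundedness estimate. Write $Q_N(g)=\frac1N\sum_{t=1}^N(Y_t^N-g(X_t))^2$ for the empirical objective, so that $g_N$ minimizes $Q_N$ over $\C$. Since $g^*\in\C$, optimality gives $Q_N(g_N)\le Q_N(g^*)$, and the strongly ergodic property (with $g=g^*$ and $c=\infty$) shows $Q_N(g^*)\to\E_\pi(Y-g^*)^2<\infty$; likewise (with $g=0$) $\frac1N\sum_t(Y_t^N)^2\to\E_\pi Y^2<\infty$. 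In particular $\limsup_N\frac1N\sum_t g_N(X_t)^2<\infty$ a.s., by the triangle inequality $\|g_N\|_N\le\|Y^N-g_N\|_N+\|Y^N\|_N$ in the empirical norm $\|\cdot\|_N^2=\frac1N\sum_t(\cdot)(X_t)^2$.

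The first and principal step is a local boundedness lemma: for every compact $B\subset\bbr^d$, $\limsup_N\sup_{x\in B}|g_N(x)|<\infty$ a.s. To prove it I would surround $B$ by finitely many balls $B_1,\dots,B_m$ in general position, each of positive $\pi$-measure (Assumption \ref{A3}, part II). By the ergodic theorem for the positive Harris chain, each $B_j$ eventually captures at least $\delta N$ of the samples $X_1,\dots,X_N$ for some $\delta>0$; combined with the averaged bound $\frac1N\sum_t g_N(X_t)^2\le C$, this forces a sample point $x_j\in B_j$ with $|g_N(x_j)|\le\sqrt{C/\delta}$. Choosing the configuration so that $B$ lies in the convex hull of any such selection yields an upper bound on $B$ by convexity, and placing $B$ on segments between pairs of controlled points yields a matching lower bound. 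I expect this to be the main obstacle, since it is the only place where the geometry of convexity must be combined quantitatively with the space-filling property of the chain.

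Granting the lemma, convex functions uniformly bounded on a slightly enlarged compact are equi-Lipschitz on $B$, so Arzel\`a--Ascoli applies; diagonalizing over an exhausting sequence of compacts, every subsequence of $(g_N)$ admits a further subsequence along which $g_N\to g_\infty$ uniformly on compacta, with $g_\infty$ convex. To identify $g_\infty=g^*$ I would work with truncations: for fixed $c$, uniform convergence on $\{\|x\|\le c\}$ together with strong ergodicity gives $\frac1N\sum_t(Y_t^N-g_N(X_t))^2 I(\|X_t\|\le c)\to\E_\pi(Y-g_\infty)^2 I(\|X\|\le c)$, while $Q_N(g_N)\le Q_N(g^*)$ bounds the full sum; letting first $N\to\infty$ and then $c\to\infty$ (monotone convergence) yields $\E_\pi(Y-g_\infty)^2\le\E_\pi(Y-g^*)^2$. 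Since $g^*$ is the unique minimizer of $\E_\pi(Y-g)^2$ over $\C$ and $g_\infty\in\C$, this forces $g_\infty=g^*$ $\pi$-a.e. As every subsequence has a further subsequence with the same limit, the whole sequence converges, $g_N\to g^*$ uniformly on compacta, which is the second assertion.

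Finally the first assertion follows from the second by a tail trade. Splitting on $\{\|X_t\|\le c\}$ and using $(g_N-g^*)^2\le 2(Y^N-g^*)^2+2(Y^N-g_N)^2$, the truncated part tends to $0$ by uniform convergence on compacta, and for the tail I would write $\frac1N\sum_t(Y_t^N-g_N(X_t))^2 I(\|X_t\|>c)=Q_N(g_N)-\frac1N\sum_t(Y_t^N-g_N(X_t))^2 I(\|X_t\|\le c)$; the first term has $\limsup\le\E_\pi(Y-g^*)^2$ and the subtracted term converges to $\E_\pi(Y-g^*)^2 I(\|X\|\le c)$, so the tail is asymptotically bounded by $\E_\pi(Y-g^*)^2 I(\|X\|>c)$, made arbitrarily small through $\E_\pi X_t^2<\infty$ and $g^*\in\mathcal{L}^2_\pi$. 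The remaining limit transitions — the cross terms obtained by polarizing the squared strongly-ergodic averages, and the ordinary ergodic averages of pure functions of $X_t$ — are routine, and I would not belabor them.
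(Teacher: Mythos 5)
Your route is genuinely different from the paper's. The paper never extracts a limit of $g_N$: it starts from the variational inequality for the least--squares minimizer, $\frac1N\sum(g_N(X_i)-g^*(X_i))^2\le\frac2N\sum(Y_i^N-g^*(X_i))(g_N(X_i)-g^*(X_i))$, splits the right-hand side at $\{\|X_i\|\le c\}$, kills the tail by Cauchy--Schwarz and the strongly ergodic bounds, and handles the truncated part by comparing with the projections $g^*_c$ onto the restricted cones $\C_c$ (Lemma \ref{restricted}, then Lemma \ref{ggc} to let $c\to\infty$); uniform convergence on compacta is deduced only afterwards. You instead prove local uniform boundedness of $(g_N)$, extract a subsequential limit $g_\infty$ by Arzel\`a--Ascoli, and identify $g_\infty=g^*$ through uniqueness of the $\mathcal{L}^2_\pi$-minimizer, obtaining uniform convergence first and the empirical $L^2$ statement second. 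Your scheme avoids the auxiliary cones $\C_c$ altogether and is conceptually cleaner; the local boundedness lemma you isolate is essentially what the paper imports from Proposition~4 of \citet{Eunji}, and your argument for it (positive $\pi$-measure of balls plus the ergodic theorem forcing $\delta N$ design points per ball, combined with $\frac1N\sum_t g_N(X_t)^2\le C$ and convexity) is the standard and correct one, provided you also note that the interpolation weight in your lower bound is kept away from zero uniformly over the random choice of controlled points, which a fixed ball configuration does deliver.

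The one step you dismiss as routine that is not: the claimed convergence $\frac1N\sum_t(Y_t^N-g_N(X_t))^2I(\|X_t\|\le c)\to\E_\pi(Y-g_\infty(X))^2I(\|X\|\le c)$ requires invoking the strongly ergodic property at the function $g_\infty$, which is random (it depends on $\omega$ and on the subsequence), whereas the definition supplies an almost-sure event only for each fixed $g\in\mathcal{L}_\pi^2$. This exchange of ``for all $g$, a.s.'' with ``a.s., for all $g$'' is exactly the difficulty the paper's Lemma \ref{restricted} exists to resolve, via Theorem~6 of \citet{Bronshtein}: convex functions uniformly bounded on a compact admit a finite sup-norm $\epsilon$-net $h_1,\dots,h_m$, the strongly ergodic limits (and their polarized cross terms) hold simultaneously for this finite family, and the limit is transferred to the $N$-dependent function with an $O(\epsilon)$ error. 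Your proof is repairable by the same device --- approximate $g_\infty$ within a countable dense family of convex functions fixed in advance --- but as written the identification of the limit has a genuine gap at precisely the point where the paper concentrates its technical effort.
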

    The proof follows the same steps as the convergence proof in
    \citet{Eunji}. The main difference is the use of the ergodic property of
    Harris chains instead of the strong law of large number for i.i.d random
    variables. Moreover, we continue to allow the model mis-specification
    in which $f(X)=E(Y|X)$ is not a convex function.

    We first start by showing the consistency of the projection onto the
    compact disk $H_c=\{X:\|X\|\leq c\}$ for every $c>0$. Then, by expanding
    this projection over the whole space, we conclude the theorem.

     For every $c>0$, define $\C_c$ as the set of all functions
     $g\in \mathcal{L}_\pi^2$ such that $g$ is a convex function
     over the disc $\{X:\|X\|\leq c\}$. Similar to Proposition 3 in
     \citet{Eunji}, we can show that $\C_c$ is a closed subset of $g\in
     \mathcal{L}_\pi^2$. Therefore, there exists a unique function $g_c^*\in
     \C_c$ which is the projection of $Y$ onto $\C_c$.
     \[\min_{g\in \C_c^2}\E_\pi (Y-g)^2
    \]
    It is clear that $g_c(x)=E[Y|X=x]$ for almost every $x \notin H_c$. In
    Lemma \ref{ggc}, we show $g_c^*$ converges to $g^*$ as $c$ goes to
    infinity.

     \begin{proofof}{ \textit{\autoref{consistance}}}
    Similar to the steps 
1, 2, and 3 in \citet{Eunji}, we have
    \begin{align}
    \frac{1}{N} \sum_{i=1}^N(g_N(X_i)-g^*(X_i))^2 &\leq \frac{2}{N}
    \sum_{i=1}^N(Y^N_i-g^*(X_i)) (g_N(X_i)-g^*(X_i)),\label{main}
    \end{align}
     and for sufficiently large $N$
\begin{align}
      \frac{1}{N} \sum_{i=1}^N(g_N(X_i))^2&\leq  \frac{8}{N}
      \sum_{i=1}^N(Y^N_i-g^*(X_i))^2+\frac{2}{N}
      \sum_{i=1}^N(g^*(X_i))^2\nonumber\\
      &\leq  9E_\pi(Y-g^*(X))^2+3\E_\pi g^*(X_i)^2=\beta.\label{beta}
   \end{align}
   We conclude the last inequality from the \textit{``strongly ergodic''}
   property of $(Y^N\colon N\geq 1)$.  By the 
Cauchy--Schwarz inequality,
   the tail of the empirical inner product can be uniformly bounded for
   every $c>0$ and sufficiently large $N$. Observe 
that
   $$
   \displaylines{
     \frac{1}{N}
     \sum_{i=1}^N(Y^N_i-g^*(X_i))(g_N(X_i)-g^*(X_i))I(\|X_i\|>c)\hfill\cr\hfill
\eqalign{ 
     &\leq \Big (\frac{1}{N} \sum_{i=1}^N(g_N(X_i)-g^*(X_i))^2\Big)^{1/2}
      \Big(\frac{1}{N} \sum_{i=1}^N( Y^N_i-g^*(X_i))^2I(\|X_i\|>c)\Big)^{1/2}\cr
     &\leq  (\beta+2E_\pi (g^*(X))^2)^{1/2}2 \Big(E_\pi (
     Y-g^*(X))I(\|X\|>c)\Big)^{1/2} .}}
     $$
 In the last line, we used the \textit{``strongly ergodic''} assumption
 and the triangle inequality. Since $E_\pi Y^2<\infty$ and $E_\pi
 g^*(X)^2<\infty$,  the right hand side can be smaller than any $\epsilon>
 0$ for large enough $c$. Thus,
 \begin{align}
 \lim_{N\rightarrow\infty}\frac{1}{N}
 \sum_{i=1}^N(Y^N_i-g^*(X_i))(g_N(X_i)-g^*(X_i))I(\|X_i\|>c)\leq \epsilon
 \label{tail}
 \end{align}
  for sufficiently large $c$. Therefore, the terms in (\ref{main}), which
  correspond to the samples outside the disk $\mathcal{H}_c$ can be made
  arbitrarily small. In the next 
lemma, we show that 
$g_N$ converges
  to $g_c^*$ inside the disk.
\begin{restatable}{lemma}{lemrest} \label{restricted} Then 
for every $c>0$,
    \[
    \limsup_{N\rightarrow \infty}\frac{1}{N} \sum_{i=1}^N(Y^N_i-g^*_c(X_i))
    (g_N(X_i)-g^*_c(X_i))I(\|X_i\|<c)\leq0.
    \]
 \end{restatable}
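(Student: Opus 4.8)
The plan is to prove the inequality by a compactness-and-subsequence argument that replaces the random, $N$-dependent estimator $g_N$ by a fixed convex test function, to which the population projection inequality can be applied.

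First I would record the variational characterization of the restricted projection. Since $g_c^*$ is the projection of $Y$ onto the closed convex set $\C_c$, the characterization of the projection from the Formulation section gives $\E_\pi[(Y-g_c^*(X))(\phi(X)-g_c^*(X))]\le 0$ for every $\phi\in\C_c$. Because $g_c^*(x)=E[Y\mid X=x]$ for $\pi$-a.e. $x\notin H_c$, the contribution from outside the disc vanishes and this reduces to
\begin{equation*}
\E_\pi\big[(Y-g_c^*(X))(\phi(X)-g_c^*(X))\,I(\|X\|\le c)\big]\le 0\qquad\text{for all }\phi\in\C_c. \tag{$\star$}
\end{equation*}
Since each $g_N\in\C$ is globally convex, its restriction to $H_c$ lies in $\C_c$; if I could replace the empirical average and $Y_i^N$ by $\E_\pi$ and $Y$ with $\phi=g_N$ I would be done, but $g_N$ varies with $N$, so the \textit{strongly ergodic} convergence (valid only for a fixed $g$) does not apply directly. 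This $N$-dependence is the crux of the proof.

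To remove it I would first obtain a priori local bounds on $g_N$. The bound (\ref{beta}) gives $\frac{1}{N}\sum_i g_N(X_i)^2\le\beta$ for large $N$; combined with the convexity of $g_N$ and Assumption (\ref{A3})(ii)--(iii) (the stationary measure charges every ball, and the empirical measure converges) this forces the restrictions $g_N|_{H_{c'}}$ to be uniformly bounded, hence---being convex---uniformly Lipschitz on each smaller disc $H_{c'}$ with $c'<c$. By Arzel\`a--Ascoli the family $\{g_N|_{H_{c'}}\}$ is precompact in $C(H_{c'})$. Arguing by contradiction, suppose the $\limsup$ equals some $\delta>0$; passing to a subsequence along which the quantity converges to $\delta$ and, by a diagonal extraction over $c'\uparrow c$, along which $g_N\to g_\infty$ uniformly on compact subsets of the open ball, I obtain a limit $g_\infty$ that is convex there. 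A short Fatou argument against (\ref{beta}) shows $\E_\pi[g_\infty^2\,I(\|X\|<c)]\le\beta$, so $g_\infty\,I(\|X\|<c)\in\mathcal{L}_\pi^2$ and $g_\infty\in\C_c$.

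Finally I would pass to the limit along this subsequence. On a fixed inner disc $H_{c'}$ I replace $g_N$ by $g_\infty$, the error being bounded via Cauchy--Schwarz by $\sup_{H_{c'}}|g_N-g_\infty|$ times an $L^2$-bounded factor coming from (\ref{beta}), and then evaluate $\frac{1}{N}\sum(Y_i^N-g_c^*)(g_\infty-g_c^*)I(\|X_i\|\le c')$ through the polarization identity $(Y-a)(b-a)=\tfrac12\big[(Y-a)^2-(Y-b)^2+(b-a)^2\big]$ together with the strongly ergodic convergence applied to the two \emph{fixed} functions $g_c^*$ and the (bounded) restriction of $g_\infty$; this produces the limit $\E_\pi[(Y-g_c^*)(g_\infty-g_c^*)I(\|X\|\le c')]$. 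The remaining annulus $\{c'\le\|X\|<c\}$ is controlled by Cauchy--Schwarz: one factor is $L^2$-bounded while the other converges to $\E_\pi[(Y-g_c^*)^2 I(c'\le\|X\|<c)]$, which tends to $0$ as $c'\uparrow c$ because $\E_\pi(Y-g_c^*)^2<\infty$. Letting $c'\uparrow c$ (using dominated convergence, justified by the integrability just established) and invoking $(\star)$ with $\phi=g_\infty$ yields $\delta=\E_\pi[(Y-g_c^*)(g_\infty-g_c^*)I(\|X\|\le c)]\le 0$, contradicting $\delta>0$; hence the $\limsup$ is $\le 0$. The main obstacle throughout is the $N$-dependence of $g_N$, handled by the Arzel\`a--Ascoli compactness step; a secondary technical point is the disc boundary, where convex functions bounded in $L^2$ need not be uniformly bounded, which is why all estimates are first carried out on $H_{c'}$ and the annulus is treated separately.
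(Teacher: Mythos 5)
Your overall strategy is sound and shares its key ingredients with the paper's proof: both reduce the claim to the population projection inequality $\E_\pi\big[(Y-g_c^*(X))(\phi(X)-g_c^*(X))I(\|X\|\le c)\big]\le 0$ for convex $\phi$, and both identify the $N$-dependence of $g_N$ as the crux, resolved via the a priori bound (\ref{beta}), which makes the family $\{g_N\}$ uniformly bounded and hence uniformly Lipschitz on shrunken discs. Where you diverge is the compactness mechanism: you use sequential compactness (Arzel\`a--Ascoli on inner discs $\mathcal{H}_{c'}$, a diagonal subsequence, a limit $g_\infty$, and a contradiction), whereas the paper uses covering compactness --- a finite \emph{deterministic} $\epsilon$-net $h_1,\dots,h_m$ for the convex functions on a slightly larger disc $\mathcal{H}_{c(1+\delta)}$ (Bronshtein's entropy bound), approximation of $g_N$ by some $h_k$ in sup norm on $\mathcal{H}_c$, and a bound by the maximum over the finitely many fixed-$h_k$ terms, each converging a.s. to a non-positive limit. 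The net version gives the $\limsup$ bound directly, with no contradiction argument, and avoids your inner-disc/annulus bookkeeping by enlarging rather than shrinking the disc.

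There is, however, one genuine gap as written: you apply the \emph{strongly ergodic} convergence to ``the two fixed functions $g_c^*$ and the (bounded) restriction of $g_\infty$,'' but $g_\infty$ is not fixed --- it is the limit of an $\omega$-dependent subsequence of the data-dependent estimators $g_N$, hence a random function. The strongly ergodic property is an a.s. statement whose exceptional null set depends on the test function $g$, so it cannot be invoked for a random $g$; the limit passage $\frac1N\sum_i(Y_i^N-g_c^*(X_i))(g_\infty(X_i)-g_c^*(X_i))I(\|X_i\|\le c')\to\E_\pi[\cdots]$ is therefore not justified as stated. The repair is essentially the paper's device: your Arzel\`a--Ascoli step already places all $g_N|_{\mathcal{H}_{c'}}$, and hence $g_\infty$, in a deterministic compact subset of $C(\mathcal{H}_{c'})$, so you can fix a deterministic countable (or finite $\epsilon$-) net of that set, obtain the ergodic convergence simultaneously for all net elements outside a single null set, and transfer it to $g_\infty$ by uniform approximation together with the Cauchy--Schwarz control you already set up. With that insertion your argument closes; without it, the key step fails.
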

See the Appendix for the proof.
 This lemma ensures that there exists a sequence $\delta_N$ converging to
 zero  such 
that
\begin{align}
\delta_N \geq{}& \frac{1}{N} \sum_{i=1}^N(Y^N_i-g^*_c(X_i))
(g_N(X_i)-g^*_c(X_i))I(\|X_i\|<c)\nonumber\\
={}&\frac{1}{N}
\sum_{i=1}^N\Big((Y^N_i-g^*(X_i))+(g^*(X_i)-g^*_c(X_i))\Big)\times
\nonumber \\
&\ \ \ \ \ \ \ \ \ \ \ \ \ \
\Big((g_N(X_i)-g^*(X_i))+(g^*(X_i)-g^*_c(X_i))\Big)I(\|X_i\|<c)\nonumber\\
={}&\frac{1}{N}
\sum_{i=1}^N(Y^N_i-g^*(X_i))(g_N(X_i)-g^*(X_i))I(\|X_i\|<c)\label{T.1}\\
&+\frac{1}{N}
\sum_{i=1}^N(Y^N_i-g^*(X_i))(g^*(X_i)-g^*_c(X_i))I(\|X_i\|<c)\label{T.2}\\
&+\frac{1}{N}
\sum_{i=1}^N(g^*(X_i)-g^*_c(X_i))(g_N(X_i)-g^*(X_i))I(\|X_i\|<c)\label{T.3}\\
&+\frac{1}{N} \sum_{i=1}^N(g^*(X_i)-g^*_c(X_i))^2I(\|X_i\|<c).\label{T.4}
\end{align}
 We can get a lower bound for (\ref{T.2}) by the 
Cauchy--Schwarz inequality.
$$
\displaylines{
\frac{1}{N} \sum_{i=1}^N(Y^N_i-g^*(X_i))(g^*(X_i)-g^*_c(X_i))I(\|X_i\|<c)
\geq\hfill\cr\hfill -2(E_\pi (Y-g^*(X))^2 )^{1/2} (E_\pi
(g^*(X)-g^*_c(X))^2I(\|X\|<c))^{1/2}}
$$
for 
sufficiently large $N$.
Similarly, we can get a lower bound for (\ref{T.3}) by using (\ref{beta})
and Cauchy-Schwarz inequality
\begin{align*}
\frac{1}{N}
\sum_{i=1}^N(g^*(X_i)-g^*_c(X_i))&(g_N(X_i)-g^*(X_i))I(\|X_i\|<c)\\
&\geq
- \left(\left(\frac{1}{N}
\sum_{i=1}^N(g_N(X_i))^2\right)+ \left(\sum_{i=1}^N (g^*(X_i))^2\right)\right)^{1/2}\\
&\hspace{.2 in}\times \left(\frac{1}{N}
\sum_{i=1}^N(g^*(X_i)-g^*_c(X_i))^2 I(\|X_i\|<c)\right)^{1/2} \\
&\geq -2\Big(\beta+E_\pi (g^*(X))^2\Big)^{1/2} \Big(E_\pi
(g^*(X)-g^*_c(X))^2I(\|X\|<c)\Big)^{1/2}
\end{align*}
By combining these inequalities and using Lemma \ref{restricted}, we obtain
$$
\displaylines{
\frac{1}{N} \sum_{i=1}^N(Y^N_i-g^*(X_i))(g_N(X_i)-g^*(X_i))I(\|X_i\|<c)
\hfill\cr\hfill
\leq \delta(N)+\beta_1 \min\{(E_\pi (g^*(X)-g^*_c(X))^2I(\|X\|<c))^{1/2},1\}.}
$$
According to Lemma \ref{ggc}, there exists a sufficiently large $c$ for
every $\epsilon>0$ such 
that
\[E_\pi (g^*(X)-g^*_c(X))^2I(\|X\|<c)\leq \epsilon'.
\]
Therefore,  there exists a sufficiently large $c$ for any $\epsilon>0$
such that
\[\frac{1}{N} \sum_{i=1}^N(Y^N_i-g^*(X_i))(g_N(X_i)-g^*(X_i))I(\|X_i\|<c)\leq
\epsilon.
\]
Now, we can use (\ref{main}) and (\ref{tail}) to conclude the 
theorem.
\begin{align*}
\lim_{N\rightarrow \infty} &\frac{1}{N} \sum_{i=1}^N(g_N(X_i)-g^*(X_i))^2\\
&\leq\frac{1}{N} \sum_{i=1}^N (Y^N_i-g^*(X_i))(g_N(X_i)-g^*(X_i))I(\|X\|<c)\\
&+\frac{1}{N} \sum_{i=1}^N (Y^N_i-g^*(X_i))(g_N(X_i)-g^*(X_i))I(\|X\|>c)
\leq 2\epsilon.
\end{align*}
 The second part of the theorem is similar to Step 8 in 
\citet{Eunji}.
   \end{proofof}
    \section{Convergence of the Value Function Estimator}
    In this section we show that the estimators given by the \textit{truncated
    method} and the \textit{fixed point projection method} converge to the
    value function as the sample size grows to infinity. The convergence of
    the truncated method holds for a general setting. However, we show the
    convergence of the estimator given by the \textit{fixed point projection
    method} under 
Assumption (\ref{A-bound}) over the value function.
    \begin{theorem}\label{TC1}Let
      \[
    Y_t^N= \sum_{j=t}^{2N} \ea[(i-t)] R_j,
    \]
    and $V_N(x)$ be the estimator of \text{truncated method} defined by
    (\ref{VNT}). Assume that (\ref{A3}) holds. Then, we have
\[
    \sup_{\|x\|<c} |\Proj_\C V^*(x)- V_N(x)|\rightarrow 0,
\]
    where $V_N$ is computed from (\ref{Proj_Trunc}).
    \end{theorem}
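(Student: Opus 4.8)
The plan is to recognize Theorem \ref{TC1} as an essentially immediate consequence of the empirical projection consistency result, Theorem \ref{consistance}, once two things are checked: (i) the truncated observation sequence $Y_t^N$ is strongly ergodic, and (ii) the projection target $g^*$ produced by Theorem \ref{consistance} is exactly $\Proj_\C V^*$. First I would observe that the quadratic program (\ref{Proj_Trunc}) defining $V_N$ is, up to the irrelevant normalizing factor $1/N$, precisely the empirical projection problem (\ref{approx_projection}) with data $Y_t^N = \sum_{j=t}^{2N} \ea[(j-t)] R_j$, and that the piecewise-linear reconstruction (\ref{VNT}) is exactly the minimizer $g_N$ of that problem evaluated pointwise. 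Thus it suffices to verify the hypotheses of Theorem \ref{consistance} and to identify its limit.

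For strong ergodicity I would invoke Lemma \ref{truncated} (cf. the second example following the definition of strong ergodicity): provided $\E_\pi r(X_t)^2 < \infty$, the truncated stream $\sum_{j=t}^{2N} \ea[(j-t)] r(X_j)$ is strongly ergodic, with limiting random variable the full discounted stream $Y = \sum_{j=t}^{\infty} \ea[(j-t)] r(X_j)$. Since the $Y_t^N$ in the statement is exactly this truncated stream, strong ergodicity holds. The crucial identification is that the conditional mean of $Y$ is the value function itself: by the definition of $V^*$, time-homogeneity, and the Markov property, $\E[Y \mid X_t = x] = \E\big[\sum_{k\geq 0} \ea[k] r(X_{t+k}) \mid X_t = x\big] = V^*(x)$.

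The main point — the one requiring an argument rather than bookkeeping — is that projecting the random variable $Y$ onto $\C$ coincides with projecting the function $V^*$. In Theorem \ref{consistance}, $g^*$ minimizes $\E_\pi(Y - g(X))^2$ over $g \in \C$, whereas $\Proj_\C V^*$ minimizes $\E_\pi(V^*(X) - g(X))^2$. I would resolve this by the orthogonal (tower) decomposition: since every $g \in \C$ is a function of $X$ alone and $\E[Y \mid X] = V^*(X)$, the residual $Y - V^*(X)$ is orthogonal in $\mathcal{L}_\pi^2$ to $V^*(X) - g(X)$, so that
\[
\E_\pi\big(Y - g(X)\big)^2 = \E_\pi\big(Y - V^*(X)\big)^2 + \E_\pi\big(V^*(X) - g(X)\big)^2 .
\]
The first term is independent of $g$, hence the two minimizations have the same minimizer and $g^* = \Proj_\C V^*$. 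This is also where the model-misspecification robustness enters: no convexity of $V^*$ is assumed, and the limit is the best convex approximation $\Proj_\C V^*$ (which reduces to $V^*$ when $V^*$ is already convex).

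Finally, with strong ergodicity established and $g^* = \Proj_\C V^*$ identified, Theorem \ref{consistance} applies under Assumption (\ref{A3}) and yields $\sup_{\|x\|\leq c} |V_N(x) - \Proj_\C V^*(x)| = \sup_{\|x\|\leq c} |g_N(x) - g^*(x)| \to 0$ almost surely for every $c > 0$, which is the claim. The only care needed beyond citing Theorem \ref{consistance} is the integrability $\E_\pi r^2 < \infty$ demanded by Lemma \ref{truncated}; this is implicit in the standing requirement $r \in \mathcal{L}_\pi^2$, so it imposes no extra hypothesis.
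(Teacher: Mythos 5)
Your proof is correct and takes essentially the same route as the paper, whose own proof consists of exactly the two citations you use: Lemma \ref{truncated} for strong ergodicity of the truncated reward stream, followed by Theorem \ref{consistance}. Your explicit identification of the limit $g^*$ with $\Proj_\C V^*$ via the tower-property orthogonality $\E[Y\mid X]=V^*(X)$ is a step the paper leaves implicit, and it is precisely what is needed to connect the minimizer of $\E_\pi(Y-g(X))^2$ to the projection of the value function.
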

    \begin{proof}
    According to Lemma (\ref{truncated}),
    \[
    Y_i^N= \sum_{j=i}^{2N} \ea[(i-j)] R_j
    \]
    is \textit{strongly ergodic}. Therefore,  the result is a direct
    conclusion of Theorem \ref{consistance}.
    \end{proof}
    Now, we are ready to prove the convergence result of the convex iterative
    projection method.	Let
       $$\bar \C_f =\{\phi \in \C: \| \nabla\phi(x)\|_\infty \leq K \mbox{
       for every } x\in \mathcal{X}, \phi(0)\geq -K \}.$$
      In the next theorem, we show that the the estimator (\ref{proj_approx})
      converges to the value function if the value function belongs to $\bar
      \C_f$. In the case that the value function is not convex, the estimators
      converge 
to the fixed point of  $\overline V=\Proj_{\C} T \overline V.$
      The existence of this fixed point is shown in Theorem \ref{ideal_fixed}.

    \begin{theorem}
    Consider a \textit{``two copy sample path''} $X_0,(X_1,\widetilde
    X_1),\ldots,(X_N,\widetilde X_N)$, and  assume that (\ref{A3}) holds. Let
    $(\hat V_k \colon k\geq 1)$ be a sequence of  convex 
functions
    generated by (\ref{forward},\ref{approx_opt},\ref{proj_approx}), and
    $\|\hat V_0\|_\pi<\infty$. Let
    \[
    \overline V=\Proj_{\C} (T \overline V).
    \]
    If $\overline V\in \bar\C_f$, then	there exists a sequence $\beta_N$
    converging to zero such that
    \[
   \frac{1}{N}\sum_{t=0}^{N-1} (\hat V_k(X_t) - \overline V(X_t))^2 \leq
   \ea[k] \gamma_N +\beta_N
    \]
  for sufficiently large $N$, where $\gamma_N= \frac{1}{N}\sum_{t=0}^{N-1}
  (\hat V_0 (X_t)- V^*(X_t))^2$.
 \end{theorem}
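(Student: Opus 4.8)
The plan is to view the empirical recursion as a perturbed version of the ideal contraction $V_{k+1}=\Proj_\C T V_k$ of Theorem \ref{ideal_fixed}, and to control the perturbation through the two-copy identity of Proposition \ref{p_2copy} together with the consistency result of Theorem \ref{consistance}. Write $\|\phi\|_{\tilde\pi_N}^2=\frac1N\sum_{t=0}^{N-1}\phi(X_t)^2$ for the empirical norm along the base path, and for a convex $V$ let $\hat H[V]_t=r(X_t)+\frac\ea2\bigl(V(X_{t+1})+V(\widetilde X_{t+1})\bigr)$ be the noisy observation vector, so that one sweep of the algorithm computes $\hat V_{k+1}=\Proj_{\C,N}\hat H[\hat V_k]$, where $\Proj_{\C,N}$ denotes the empirical least-squares fit (\ref{approx_opt})--(\ref{proj_approx}) onto the constrained cone $\bar\C_f$. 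I set $\Delta_k=\hat V_k-\overline V$ and track $\|\Delta_k\|_{\tilde\pi_N}$.

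First I would record two facts. (i) The feasible region of (\ref{approx_opt}) is a fixed closed convex subset of $(\bbr^N,\|\cdot\|_{\tilde\pi_N})$ that does not depend on the fitted data, so $\Proj_{\C,N}$ is non-expansive. (ii) Because $\overline V\in\bar\C_f$ and $\overline V=\Proj_\C T\overline V$, the target $\overline V$ is an approximate empirical fixed point: the sequence $\hat H[\overline V]$ is strongly ergodic (Lemma \ref{two-ergodic}) with conditional mean $T\overline V(X_t)$ given $X_t$, so its $\C$-projection is $\Proj_\C T\overline V=\overline V$; since this limit already lies in $\bar\C_f$, the constrained and unconstrained projections agree, and the constrained analogue of Theorem \ref{consistance} yields $\rho_N:=\|\Proj_{\C,N}\hat H[\overline V]-\overline V\|_{\tilde\pi_N}\to 0$ a.s.

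Next comes the one-step contraction. Inserting $\Proj_{\C,N}\hat H[\overline V]$ and using (i)--(ii),
\begin{align*}
\|\Delta_{k+1}\|_{\tilde\pi_N}
&\le \bigl\|\Proj_{\C,N}\hat H[\hat V_k]-\Proj_{\C,N}\hat H[\overline V]\bigr\|_{\tilde\pi_N}+\rho_N\\
&\le \bigl\|\hat H[\hat V_k]-\hat H[\overline V]\bigr\|_{\tilde\pi_N}+\rho_N.
\end{align*}
Since $\hat H[\hat V_k]_t-\hat H[\overline V]_t=\tfrac\ea2\bigl(\Delta_k(X_{t+1})+\Delta_k(\widetilde X_{t+1})\bigr)$, expanding the square and passing to the ergodic limit for the two-copy chain shows its square is asymptotically bounded by $\ea[2]\E_\pi\Delta_k^2$ (the cross term contributes $\E_\pi\bigl(\E[\Delta_k(X_1)\mid X_0]\bigr)^2\le\E_\pi\Delta_k^2$ by conditional independence), so the first term itself is bounded by $\ea\|\Delta_k\|_{\tilde\pi_N}$ up to a vanishing error. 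This gives $\|\Delta_{k+1}\|_{\tilde\pi_N}\le\ea\|\Delta_k\|_{\tilde\pi_N}+\epsilon_N$ with $\epsilon_N\to0$; unrolling yields $\|\Delta_k\|_{\tilde\pi_N}\le\ea[k]\|\Delta_0\|_{\tilde\pi_N}+\epsilon_N/(1-\ea)$. Squaring, bounding $\ea[2k]\le\ea[k]$, and absorbing the cross and remainder terms into a single $\beta_N\to0$ (using that $\|\Delta_0\|_{\tilde\pi_N}$ stays bounded since $\|\hat V_0\|_\pi<\infty$) produces exactly $\frac1N\sum_{t}(\hat V_k(X_t)-\overline V(X_t))^2\le\ea[k]\gamma_N+\beta_N$ with $\gamma_N=\|\Delta_0\|_{\tilde\pi_N}^2$, which coincides with the stated $\gamma_N$ in the well-specified case $\overline V=V^*$.

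The hard part is the phrase \emph{up to a vanishing error}: since $\hat V_k$ is itself built from the sample path, the ergodic limits above are not instances of Lemma \ref{two-ergodic} for a \emph{fixed} function, because the same randomness drives both the iterate and the averaging. To decouple this I would first show the iterates are uniformly controlled: a uniform $L^2$ bound, as in (\ref{beta}), upgraded through the subgradient constraint $\|\nabla\hat V_k\|_\infty\le K$ and the boundedness of $\mathcal{X}$ to a uniform sup-norm bound, so that every $\hat V_k$ lies in a fixed equi-Lipschitz, uniformly bounded --- hence, by Arzel\`a--Ascoli, compact --- subclass $\kcal\subset\bar\C_f$. I would then prove a uniform ergodic theorem over $\kcal$, namely $\sup_{\phi,\psi\in\kcal}\bigl|\frac1N\sum_t h_{\phi,\psi}(X_t)-\E_\pi h_{\phi,\psi}\bigr|\to0$ a.s., together with its versions over the auxiliary copies $\widetilde X_{t+1}$ and the cross terms. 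This uniformity makes the per-step errors $\epsilon_N$ simultaneously valid for all $k$, and is where the substantive effort of the argument is concentrated.
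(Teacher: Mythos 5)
Your proposal follows essentially the same route as the paper: you insert the empirical projection of the oracle targets $\hat H[\overline V]$ (the paper's $\widetilde V_N$), use non-expansiveness of the empirical least-squares projection for the one-step contraction, invoke a uniform ergodic theorem over the bounded equi-Lipschitz class $\bar\C_f$ (the paper's Lemma \ref{uniform_con}, proved via exactly the Bronshtein $\epsilon$-net argument you describe) to pass between empirical norms on the two-copy points and the base path, and then unroll the recursion using Theorem \ref{consistance}. The only small slip is the claim that the constrained and unconstrained empirical projections of $\hat H[\overline V]$ agree because the population limit lies in $\bar\C_f$ --- they need not coincide for finite $N$, but the paper's Lemma \ref{V*-con} closes this with a short comparison between the two projections, so your $\rho_N\to 0$ still holds.
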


\begin{proof}
    First, we give some motivation for the definition of $H_t$ in
    (\ref{forward}). Next, by using the contraction property of projection,
    we show that  the distance between $\hat V_k$ and $\overline V$ is
    approximately shrinking by a factor of $\ea$ at each stage of the
    iteration. 

According 
to the fixed point assumption, $\overline V$ is the minimizer 
of
	\[\min_{\phi \in \C}\E_\pi (T\overline V - \phi )^2.
    \]
    Therefore, due to Proposition \ref{p_2copy}, we can conclude that
    $\overline V$ is also the minimizer of the 
optimization
    \[
    \overline V=\arg\min_{\phi \in \C}\E_\pi ( r(X_t)+\frac \ea 2  (\overline
    V(X_{t+1})+\overline V(\widetilde X_{t+1}))- \phi(X_t) )^2.
    \]
  Observe that $\bar \C_f $ is a closed convex subset of $\C$. Let
    \begin{align}
      \widetilde V_N=\arg\min_{\phi \in \bar \C_f } \frac{1}{N}\sum_{t=0}^{N-1}
      (\tilde H_t -\phi(X_t) )^2,\label{tilde_V}
     \end{align}
     where
     \[
     \widetilde H_t= r(X_t)+\frac \ea 2  (\overline V(X_{t+1})+\overline
     V(\widetilde X_{t+1})).
     \]
	 Suppose $\tilde \pi_N$ is the empirical 
semi-norm induced by
	 sample path $X_0, X_1, \ldots, X_N$. The distance between 
the
	 two functions $f,g$ under  $\tilde \pi_N$ is
    \[
    \|f-g\|_{\tilde \pi_N}= \Big (\frac{1}{N} \sum_{t=0}^{N-1}
    (f(X_t)-g(X_t))^2\Big)^{1/2}.
    \]
    The following lemma asserts that the empirical norm asymptotically
    converges to $L^2_\pi$ norm over $\bar \C_f$.
\begin{restatable}{lemma}{lemuniform}\label{uniform_con}
We have
\begin{align}
\sup_{\phi_1,\phi_2\in \bar \C_f } \Big | \frac{1}{N}\sum_{t=0}^{N-1}
(\phi_1(X_t)-\phi_2(X_t))^2 -\E_\pi (\phi_1(X_t)-\phi_2(X_t))^2\Big
|\rightarrow 0
\end{align}
as $N$ goes to infinity.
\end{restatable}
See the Appendix for the proof.

     Note that $\tilde V$ and $V_{k+1}$ are the projection of $(\tilde H_t)$
     and $(H_t^k)$ onto the convex set $\bar \C_f $ with respect to the
     semi-norm $\tilde \pi$. Since the projection to the convex set is a
     contraction, we 
obtain
\begin{align*}
   \|\hat V_{k+1}- \widetilde V_N\|^2_{\tilde \pi_N}\leq{}&  \|H_k-\tilde H\|_{\tilde
   \pi_N}\\
   \leq{}& \ea/2 \|V_k(X_{t+1})-\overline V(X_{t+1})\|_{\tilde \pi_N}\\
  &+ \ea/2 \|V_k(\widetilde X_{t+1})-\overline V(\widetilde X_{t+1})\|_{\tilde
  \pi_N}.
\end{align*}
By 
Lemma \ref{uniform_con}, we can bound the right hand side by
its expectation and an error term less than $\delta(N)$ in which
$\delta(N)\downarrow 0$ as $N\rightarrow \infty$. 
Thus,
  \begin{align*}
   \|\hat V_{k+1}-\widetilde V_N\|_{\tilde \pi_N}
\leq{}& \ea/2\Big( \E(V_k(X_{t+1})-\overline V(X_{t+1}))^2\Big)^{1/2}\\
  &+ \ea/2\Big( \E(V_k(\widetilde X_{t+1})-\overline V(\widetilde
  X_{t+1}))^2\Big)^{1/2}+\delta_N\\
  ={}&\ea \Big( \E(V_k(X_{t})-\overline V(X_{t}))^2\Big)^{1/2}+\delta_N\\
  \leq{}& \ea \|V_k-\overline V\|_{\tilde \pi_N}+2\delta_N.
\end{align*}
By the triangle inequality, we obtain
  \begin{align*}
  \|\hat V_{k+1}- V^*\|_{\tilde \pi_N}
  &\leq   \|\hat V_{k+1}- \widetilde V_N\|_{\tilde \pi_N}+	\|\widetilde V_N-
  \overline V\|_{\tilde \pi_N}\\
   &\leq \ea \|V_k-\overline V\|_{\tilde \pi_N}+\|\widetilde V_N- \overline
   V\|_{\tilde \pi_N}+2\delta_N.
\end{align*}
From the last inequality, it can be inductively observed that
  \begin{align*}
  \|\hat V_{k}- \overline V\|_{\tilde \pi_N}&\leq \ea[k] \|\hat V_{0}-
  \overline V\|_{\tilde \pi_N}+\frac{1-\ea[(k+1)]}{1-\ea} ( \|\tilde V_N-
  \overline V\|_{\pi}+\delta_N)\\
   \lim_{k\rightarrow \infty}\|\hat V_{k}- \overline V\|_{\pi}&\leq
   \beta_N=\frac{1}{1-\ea} ( \|\tilde V_N- \overline V\|_{\pi}+\delta_N).
\end{align*}
As a result of Theorem	\ref{consistance}, we can show that  $\|\tilde V_N-
\overline V\|_{\tilde \pi_N}$ converges to zero as $N\rightarrow \infty$;
see Lemma \ref{V*-con} for more details. Therefore, 
$\beta_N$ is 
converging
to zero.
    \def\labelenumi{(\roman{enumi})}\end{proof}
    \section{Extensions}
    Here, we consider two extensions to estimate the value functions by
    exploiting other shape structures. First, we consider the case in
    which the value function is Lipschitz for a known constant $K$. The
    second extension employs 
the property that the value function
    is non-decreasing and convex. The main difference between exploiting
    different shape properties is the projection/regression step. In both
    \textit{truncated} and 
{fixed point projection} methods, we can replace
    the projection onto the set of convex functions with projection onto
    the set of Lipschitz or non-decreasing and convex functions. The other
    steps of the methods are similar.
\paragraph{Lipschitz}
 Assume that we know the value function $V^*$ is Lipschitz for a known
 constant $K$. We exploit this property to estimate the value function. In
 particular, we assume that the value function belongs to
 the set
\[
Lip_{K}=\Big\{\phi:\phi \in \mathcal{L}_\pi, |\phi(0)|<\tilde K,
|\phi(x)-\phi(y)|\leq K \|x-y\|_2 \mbox { for all } x,y\in \bbr^d  \Big\}.
\]
We can easily show that $Lip_{K}$ is a closed convex set in the Hilbert
space $\mathcal{L}_\pi$. Projection onto the $Lip_{K}$ can be defined as
\[\Proj_{Lip_K} (f)=\arg\min_{\phi\in {Lip_K}}\|f-\phi\|_\pi.\\
\]
for every $f \in \mathcal{L}_\pi$.
Let $(X_1,X_2,\ldots,X_{2N})$ be a sample path of length $2N$. Let $Y_i^N$
be 
the random variable defined 
in 
Equation (\ref{t-1}) which represents 
a
noisy observation of the value function at the sample point $X_i$. Similar to
the 
\textit {truncated method} for convex value 
functions, the estimator of $V^*$
can be achieved by projecting the random vector $Y^N=(Y_1^N,\ldots,Y_N^N)$
onto the convex set $Lip_{K}$ . The projection is possible by solving the
following QP:
\begin{align} \label{Proj_Trunc_Lip}
  \min_{p_i} \sum_{i=1}^N &(Y_i^N-p_i)^2\\
   p_i -p_j&\leq \  \ K \|X_i-X_j\| \mbox{ \  \  \  \  \  \	\  \   \
   for every $0 \leq i,j \leq N$}\nonumber\\
 -\tilde K&\leq p_0\leq \tilde K, \nonumber
 \end{align}
 where 
$X_0=0$.

Having 
the optimal solution $p_1,\ldots, p_N$ to this optimization, we can
construct an estimator belonging to the set $Lip_{K}$ . Define
\[
V_N(x)=\min_{0 \leq i\leq N} \Big (p_i +K \|x-X_i\|\Big).
\]
Note that the estimator can be evaluated at each point $x$ in linear
time. Similar to Theorem $\ref{TC1}$, it is possible to show that the
estimator $V_N(x)$ uniformly converges to $V^*(x)$ over every compact set
as 
$N\rightarrow \infty$.

Similarly, we can extend the \textit{fixed point projection} to the Lipschitz
case. For a fixed 
\textit{two copy sample path} of length $N$, one can
find $H^N=(H_1^N,\ldots,H_N^N)$ from (\ref{forward}) and next project the
vector $H^N$ to the convex closed set $Lip_K$ by solving a similar QP to
(\ref{Proj_Trunc_Lip}).
\paragraph{Convex and Monotonic.} 
As another extension, we consider the case
that the value function is both convex and non-decreasing. There is a variety
of Markov decision problems in the queue admission, batch service, marketing,
and aging and replacement settings, where the value function is monotone. We
say that a function $\phi:\bbr^d\rightarrow \bbr$ is \textit{non-decreasing}
if $\phi(x)\leq \phi(y)$
whenever $x \leq y$ (so that $x_i \leq	y_i$ for $1 \leq  i \leq  d$). We
now adjust the definition of the cone of functions $\C$ to
\[
\mathcal{CM}=\Big\{\phi:\phi \in \mathcal{L}_\pi, \mbox{and $\phi$ is a
convex and non-decreasing function.} \Big\}.
\]
One can easily show that $\mathcal{CM}$ is a closed and convex cone of the
Hilbert space $\mathcal{L}_\pi$.
Here, we have to project the noisy observations $Y^N$ computed in (\ref{t-1})
for \textit{truncated method}, or $H^N$ computed in (\ref{forward}) for
\textit{fixed point projection} onto the cone $\mathcal{CM}$. The projection
is again obtained by solving a QP:
\begin{align}
 & \min_{p_i,\zeta_i} \sum_{i=1}^N (Y_i^N-p_i)^2\\
  & p_i \geq p_j +\zeta_j^T(X_i-X_j) &\mbox{for
  every $1\leq i,j \leq N$}\nonumber\\
 & \zeta_i\geq 0  &\mbox{for all $1\leq i \leq
 N$}.\nonumber
 \end{align}
In 
addition, the estimator $V_N(x)$ can be evaluated 
by
\[
V_N(x)=\max_{1\leq i\leq N} (p_i+\zeta_i^T (x-X_i)).
\]
Note that for every $x\in \bbr^d$, we have $\partial V_N(x)=\zeta_i\geq 0$
for some $i$. Therefore, the estimator is convex and non-decreasing.

    \section{ Case Study: Pricing Tolling Contracts}
	This section considers the problem of scheduling dual-fuel power
	stations in the presence of switching 
costs. One of the fundamental
	problems encountered in the energy markets is the pricing of tolling
	agreement contracts.  By signing a ``tolling contract'', 
power plant
	owners can reduce their exposure to fuel prices by transferring
	control of the plant to a third party.	This third party is then
	responsible for any costs, fuel or otherwise, involved in meeting
	power plant obligations.  The complexity of pricing such contracts
	arises as a result of interplay between limited flexibility and
	uncertainty.

       Consider a renter who has leased a dual-fuel power plant in a
       de-regulated market. The agent  dynamically determines the operating
       mode of the power plant as the fuel and electricity
prices fluctuate. Our goal is to evaluate the expected total profit 
for
given fixed scheduling policies.

Note that by using the policy iteration method, it is straightforward to
update the policy iteratively and achieve the optimal scheduling policies
as well.

	This specific pricing/control problem is widely considered
	to be a challenging control problem. In mathematical finance
	literature, several authors, including \citet{dixit1989entry,
	Brekke:1994:OSE:181228.181236, doi:10.1080/17442500903106606}, have
	focused on obtaining closed-form solutions by making simplified
	assumptions. The problem also is also studied by \citet{Deng2006,
	carmona2008pricing, djehiche2009finite, Bardou2009} in parametric
	ADP literature.
\subsection{Modeling}
We adopt the model of \citet{carmona2008pricing} in our study. Consider
a dual-mode power plant that can use either natural gas or oil. Due to
increased development of natural gas infrastructure in coastal US regions
in recent years, these power plants have become popular. To run the plant,
the operator buys natural gas or oil,
converts it into electricity and sells the output on the market.

 The fluctuation of prices can be modeled by the gas/oil spark-spread. The
 spark-spread is the 
difference between the price of electricity (output) and
 the prices of its primary fuels (inputs). Specifically, 
let $P_t$ and $G_t$
 be the prices of electricity and gas at time $t$.   The heat rate, denoted by
 $\overline{HR}_G$, is the amount of fuel needed by a power plant to produce
 one kilowatt-hour (kWh) of electricity. The gas spark spread is given by
  $$X_t^1=(P_t - \overline {HR}_G \cdot G_t).$$
Similarly,  the oil spark spread is represented by  $X_t^2=(P_t - \overline
{HR}_O \cdot O_t)$, 
where $O_t$ is the price of oil at time $t$; see \citet
[p. 49-51]{eydeland2003energy} for more details.
 Empirical studies (\citet{eydeland2003energy}) have suggested that the spark
 spread is indeed stationary. We model the driving process $X_t=(X_t^1,X_t^2)$
 as a $2$-dimensional 
Ornstein--Uhlenbeck process with jump, namely
\[
dX_t^n=\kappa (\theta-X_t^n)dt +\Sigma^n \cdot dW_t+Y^n dN_t, \hspace{.5in}
n=1,2,
\]
where 
$W=(t\geq0 \colon W_t)$ is a 2-dimensional standard Wiener process,
$N_t$ is an 
independent Poisson processes with
intensity $\lambda$, $Y^n$ is an independent 
exponential random variable,
and $\Sigma\in \bbr^{2\times 2}$
is a constant non-degenerate volatility matrix.

The mode of operation at each time step $t$ is represented by
$s_t\in\{\mbox{oil,gas}\}$. Also, 
let $s_{-1}$ be the operation mode
immediately before the starting time. Switching is 
allowed only at
the beginning of each time slot. Moreover, changing the operation
mode is costly, requiring extra fuel and various overhead costs. Let
$C_{i,j}^t=c_{i,j}X_t^j$ be the cost of switching from mode $i$ to mode
$j$ if $i\neq j$. Clearly, if there is no switching, the switching cost is
$C_{i,i}=0$ for  $i\in\{\mbox{oil, gas}\}$.

The profit function $\psi(X_t,s_t)$ is considered as a linear function of
spark spread. For instance, if the plant is fueled by natural gas, we define
 $$
     \psi(X_t,\mbox{gas})\df\overline{Cap}_G \cdot (X_t^1 - K_G),
 $$
 where $K_G$ is the operating cost 
and $\overline{Cap}_G$ is the capacity
 of the plant in gas mode.
 Therefore, the value function is
 \[
    V_u(x^1,x^2, i)=\E\left[\sum_{t=0}^\infty e^{-\alpha
    t} \left (\psi(X_t,s_t)-C_{s_{t-1},s_t}^t\right)
    \Big\vert\Big. X_0=(x^1,x^2),s_{-1}=i \right],
 \]
 where $u$ is the switching policy.  In Theorem 3.5.4 of
 \citet{ludkovski2005optimal}, It is shown 
that the optimal value
 function $V_{\mbox{\tiny opt}}(X,i)$ is convex for this model. Furthermore,
 it is straightforward to show that $V_{\mbox{\tiny myopic }}(X,i)$ is a
 convex function for myopic policy.

  Let the current operation mode of the power plant be $s_{t-1}=i$ at the
  beginning of the time slot $t$. Under the myopic policy, the operation
  mode is switched from $i$ to $j$ in the case that
 \[
     \psi(X_t,\mbox{j})-C_{i,j}^t\geq \psi(X_t,\mbox{i}).
 \]
 We numerically compute the value function for the the myopic policy by
 using the convexity property of the myopic policy.
\subsection{Numerical Results}
 In this section, we report our numerical results to estimate the value
 function for the switching problem for a fixed policy. The value function
 is computed for the myopic policy.
  The value function $J(x,s)$ is estimated at the point $x=(10,10),
  s=\mbox{gas}$ by the 
\textit{truncated method} and \textit{fixed point
  projection}.

  The results are compared with the parametric recursive least squares
  method developed for policy evaluation (RLSAPI) in \citet{powell2009}.
  Most of the other methods suggested for solving this problem are based
  on value iteration, and can not compute the performance by a single sample path.

    We need to	specify an appropriate approximation architecture for 
the
    parametric method. Approximation architectures that span polynomials
    are known to work well for switching 
problems. We use all monomials with
    degree at most three which we call the cubic basis 
as our approximation
    architectures.
To have a fair comparison, we compare the result of a two copy sample 
path
of length $N$ with a single sample path of length $2N$ used in RLSAPI.

For solving the optimization $(\ref{Proj_Trunc})$, we use the cutting plane
algorithm as a more efficient approach for solving this optimization problem.

 Table \ref{adp-results} reports the averages (Mean) and the standard
 deviation (Std) of the estimators computed by the truncated
 method, the {fixed point projection, and the RLSAPI.	We wish
 to compute $V_{\mbox{ \tiny myopic}}(x,s)$ at 
$x=(10,10)$ and
 $s=\mbox{gas}$. The results of \textit{truncated method} and the RLSAPI
 method are based on $2000$ replications for each value of $N$.  In the
 \textit{fixed point projection}, we estimate based on $100$ replications
 for each value of $N$.
 Since we approximate the value function at a single point, we can use Monte
 Carlo simulation to compute the value at this point as a benchmark. We
 compute the value of $V_{\tiny \mbox{myopic}}(10,10,\mbox{gas})$ in the
 last row of Table \ref{adp-results} by averaging the discounted reward of
 $M=200,000$ sample paths with length $N=200$.
 \hspace{10mm}
\begin {table}[H]\label{adp-results}
\begin{center}
\scalebox{.90}{
\begin{minipage}{\textwidth}
\begin{tabular}{@{\extracolsep{4pt}}clclclclclclc@{}}
\hline
& \multicolumn{2}{ c }{Truncated Method}& \multicolumn{2}{ c }{Fixed Point
Projection}& \multicolumn{2}{ c }{RLSAPI }\\
 \cline{2-3}\cline{4-5}\cline{6-7}
N & Mean & Std & Mean & Std & Mean & Std \\
 \hline
2000 & 718.22 & 33.94 & 714.16 & 21.88 & 713.81 & 22.66 \\
2500 & 717.33 & 32.11 & 715.96 & 16.92 & 713.61 & 17.62 \\
3000 & 717.70 & 31.95 & 717.48 & 14.81 & 713.53 & 17.36 \\
4000 & 716.84 & 31.56 & 717.12 & 11.17 & 713.96 & 16.32 \\
 \hline
$V_{\tiny \mbox{myopic}}(10,10,\mbox{gas})$ & \multicolumn{6}{ c }{ 716.47} \\
 \hline
\end{tabular}
\caption{\small Performance of the Truncated Method, the Fixed Point
Projection, and the RLSAPI.}
\end{minipage}}
\end{center}
\end {table}

 The parameters of the 
O-U process are set as $\Sigma=[1,0.2;0.2,1],
 \kappa=2,e^{-\alpha}=.9, \theta=10, dt=.1,\lambda=2 $. 
The switching
 cost coefficients are $c_{ \mbox{\tiny gas,oil}}=1$ and $c_{ \mbox{\tiny
 oil,gas}}=2$, and the profit functions are 
 \begin{align*}
 &\psi(x_1,x_2,\mbox{gas})=
 10\cdot (x^1 - 5),\\
 &\psi(x_1,x_2,\mbox{oil})= 15\cdot (x^2 - 6.66).
\end{align*}
Table \ref{adp-results} shows that the performance of the 
truncated method
and fixed point projection are better compared 
to the RLSAPI. 
Furthermore,
the  fixed point projection has less variance compared 
to the
Truncated method.  Finally, it is clear that larger sample sets yield a
significant performance improvement.

\bibliographystyle{apalike}
\bibliography{references}

\newpage
\appendix
\section{Proof Details}

 \begin{restatable}{lemma}{lemggc}
    \label {ggc}  Suppose that $f\in \mathcal{L}_\pi^2$. Let $g^*$ be the
    projection of $f$ onto $\C$, and $g^*_n$ be the projection of $f$ onto
    $\C_n$.  Then, there exists a subsequence $c_k\uparrow\infty$ such that
    \[\|g^*-g^*_{c_k}\|_\pi\rightarrow 0
    \]
    as $k$ goes to infinity.
	\end{restatable}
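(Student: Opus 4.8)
The plan is to exploit the nested structure of the sets $\C_c$ together with the Hilbert-space geometry of projections in $\mathcal{L}_\pi^2$. First I would record that the defining constraint becomes more stringent as the radius grows: a function convex on $\{\|x\|\le c'\}$ is convex on the smaller disc $\{\|x\|\le c\}$ whenever $c<c'$, so that $\C\subseteq\C_{c'}\subseteq\C_c$ and $\bigcap_{c>0}\C_c=\C$. Writing $d_c:=\|f-g^*_c\|_\pi$ for the approximation error and $D:=\|f-g^*\|_\pi$, the inclusions force $d_c$ to be nondecreasing in $c$, while $g^*\in\C\subseteq\C_c$ gives $d_c\le D$ for every $c$. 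Hence $d_c\uparrow L$ for some finite $L\le D$.

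Second, I would upgrade monotonicity of the errors to norm convergence of the projections themselves. Fix $c<c'$; since $g^*_{c'}\in\C_{c'}\subseteq\C_c$, the variational characterization of the projection onto the closed convex set $\C_c$, namely $\langle f-g^*_c,\,\phi-g^*_c\rangle_\pi\le 0$ for all $\phi\in\C_c$, applied at $\phi=g^*_{c'}$ yields $\langle f-g^*_c,\,g^*_{c'}-g^*_c\rangle_\pi\le 0$. Expanding $\|f-g^*_{c'}\|_\pi^2=\|(f-g^*_c)-(g^*_{c'}-g^*_c)\|_\pi^2$ then gives the Pythagorean-type estimate $\|g^*_c-g^*_{c'}\|_\pi^2\le d_{c'}^2-d_c^2$. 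Because $d_c^2\to L^2$, the right-hand side vanishes as $c,c'\to\infty$, so the family $(g^*_c)$ is Cauchy and converges in $\mathcal{L}_\pi^2$ to some $g^\infty$; in particular it converges along any radii $c_k\uparrow\infty$.

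Third, I would identify $g^\infty$ with $g^*$. Each $\C_c$ is closed in $\mathcal{L}_\pi^2$ (shown as in Proposition 3 of \citet{Eunji}). Fixing $r>0$, all $g^*_c$ with $c\ge r$ lie in $\C_c\subseteq\C_r$, so the tail of the convergent sequence lies in the closed set $\C_r$ and therefore $g^\infty\in\C_r$; as this holds for every $r$, a gluing step shows $g^\infty\in\C$. Finally, continuity of the norm gives $\|f-g^\infty\|_\pi=\lim_c d_c=L\le D$, whereas $g^\infty\in\C$ and the minimality defining $g^*$ give $D=\|f-g^*\|_\pi\le\|f-g^\infty\|_\pi$. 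The two distances coincide, and uniqueness of the projection onto the closed convex cone $\C$ yields $g^\infty=g^*$, whence $\|g^*-g^*_{c_k}\|_\pi\to 0$.

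The step I expect to be the main obstacle is the passage ``$g^\infty\in\C_r$ for every $r$'' $\Rightarrow$ ``$g^\infty\in\C$'': closedness of each $\C_r$ controls one radius at a time, but one must still exhibit a single representative that is globally convex. Here I would use that $\pi$ has full support (Assumption (\ref{A3})): two convex functions that agree $\pi$-a.e.\ on a disc agree on its interior by continuity, so the a.e.\ representatives on overlapping discs coincide and glue into one convex function equal to $g^\infty$ almost everywhere. I would spell out this gluing rather than treat it as automatic, since it is exactly where the measure-theoretic and convex-analytic parts of the argument meet.
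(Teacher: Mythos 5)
Your proposal is correct and follows essentially the same route as the paper's proof of Lemma \ref{ggc}: the nested inclusions $\C\subseteq\C_{c'}\subseteq\C_c$, the monotone bounded distances, the Pythagorean estimate $\|g^*_c-g^*_{c'}\|_\pi^2\leq \|f-g^*_{c'}\|_\pi^2-\|f-g^*_c\|_\pi^2$ obtained from the variational inequality $\langle f-g^*_c,\,g^*_{c'}-g^*_c\rangle_\pi\leq 0$, and hence a Cauchy family whose limit $g^\infty$ is shown to lie in $\C$ and to coincide with $g^*$. The only divergence is in the finish --- the paper extracts an a.e.-convergent subsequence to get convexity of the limit and then passes the inequality $\langle f-g^*_n,\phi-g^*_n\rangle_\pi\leq 0$ to the limit, whereas you use closedness of each $\C_r$ together with a gluing argument and then compare $\|f-g^\infty\|_\pi$ with $\|f-g^*\|_\pi$ and invoke uniqueness of the projection; both finishes are valid, and your explicit treatment of the representative/gluing issue via the full-support assumption is, if anything, more careful than the paper's.
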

\begin{proof}
For this lemma, we first observe that the sequence of projected functions are
converging to a convex function. Then, we show this limit is the projection
with respect to 
$\C$.

First, observe that 
$\C_1\supset \C_2\supset\cdots\supset \C$ . Let
$f(X)=\E[Y|X]$, then $f\in \mathcal{L}_\pi^2$.	The projection of $f$
onto $C_n$ is $g_n^*$ and $g_m \in \C_n$ for every $m\geq n$. So we 
have
$$
\displaylines{
\langle f-g_n^*,g_m^*-g_n^*\rangle_\pi\leq 0,\cr
\|f-g_m^*\|^2_\pi+\|g_m^*\|_\pi^2=\|f\|_\pi^2<\infty.}
$$
It is easy to show that
\[
\|f\|_\pi^2\geq \|f-g_m^*\|^2_\pi\geq \|f-g_n^*\|^2_\pi+\|g_m^*-g_n^*\|^2_\pi.
\]
for every $m>n$. Therefore, $\|f-g_m^*\|^2$ is an increasing bounded
sequence. It follows that $W=\lim_{m\rightarrow \infty}|f-g_m^*\|^2_\pi$
for 
some $W<\|f\|_\pi^2$.  Since $\|f-g_m^*\|^2_\pi-\|f-g_n^*\|_\pi^2 \geq
\|g_m^*-g_n^*\|_\pi^2$ for $m\geq n$,  $(g_n^*)$ is a Cauchy sequence in
$L_\pi^2$. Therefore, $g_n^*$ converges to $g_\infty$ in $L^2$ norm. This
implies that there is a sub-sequence $g_{n_k}^*$ converging almost 
surely
to $g_\infty$.

 Let $x,y\in \bbr^d$, and $0<\theta<1$.  Then,
\[
g_{n_k}^*(\theta x+(1-\theta y) y)\leq \theta
g_{n_k}^*(x)+(1-\theta)g_{n_k}^*(y)
\]
for 
every $n_k>\max(x,y)$. Since, $g_{n_k}^*$ converges almost surely to
$g_\infty$ as $n_k$ goes to infinity, $g_\infty$ is a convex function in
$L^2_\pi$, from which we conclude that $g_\infty \in \C$.

Now, we show that $g_\infty$ is the projection of $f$ over $\C$. Every
convex function $\phi \in\C$ is convex over $\mathcal{H}_c$, so $\phi \in
\C_m$ for every $m$. Thus
\[\langle f-g_n^*,\phi-g_n^*\rangle_\pi\leq 0.
\]
Since $\|g_n^*-g_\infty\|_\pi\rightarrow 0$, it easy to show that
\[\langle f-g_\infty,\phi-g_\infty\rangle_\pi\leq 0.
\]
This equality holds for every $\phi \in \C$. As a result, we can conclude that
$g_\infty$ is the projection of $f$ over $\C$. According to the convexity of
$\C$, the projection of $f$ is unique and equal to $g^*$. Thus, $g_\infty=g^*$
almost everywhere, and we 
have
$$\|g_n^*-g^*\|_\pi\rightarrow 0.$$
\end{proof}
\lemrest*
\begin{proofof}{\textit{\autoref{restricted}}}
Replacing the $g_N$ with a fixed convex function $\tilde g$ 
not
dependent  on $N$, showing the lemma is straightforward.  The 
right-hand
average was converging to the inner product $\langle Y-g_c,\tilde
g-g_c\rangle_\pi$. Since, $g_c$ is projection of $Y$ to the close convex
set $\C_c$, this inner product is negative. However, this argument fails
for $g_N$ 
since it depends on $N$. To fix this difficulty, we show that
it is possible to approximate every function $g_N$ by a member of a finite
set of convex functions over $\mathcal{H}_c$. So, the limit is bounded with
a 
corresponding limit for a fixed convex function 
which is asymptotically
negative according to the projection property.

By 
Assumption \ref{A3}, we have $\pi(X\in B)>0$ for every compact disc $B$,
and (\ref{beta}) ensures that
\[
\frac{1}{N} \sum_{i=1}^N g_N(X_i)^2 \leq \beta.
\]
Similar to 
Proposition 4 in \citet{Eunji}, it is possible to show that
for each $c>0$, there exists a 
deterministic $\gamma(c)$, such that $g_N$
is Lipschitz over $\mathcal{H}_c$ with factor $\gamma(c)$ for 
sufficiently
large $N$. It follows that for every $\epsilon> 0$, there exists a finite
collection of convex functions $h_1,h_2,\ldots,h_m$ which is $\epsilon$-net
for $\C_{c(1+\delta)}$; 
for every large $N$ there exists some $h_k$ such that
\[\sup_{x\in\mathcal{H}_c} |g_N(x)-h_k(x)|\leq \epsilon;
\]
 see 
Theorem 6 of \citet{Bronshtein}.
If $h_k$ 
and $g_N$ satisfy this property, observe 
that
$$
\displaylines{
\frac{1}{N} \sum_{i=1}^N(Y^N_i-g^*_c(X_i))
(g_N(X_i)-g^*_c(X_i))I(\|X_i\|<c)
\hfill\cr\hfill\eqalign{
\leq{}& \frac{1}{N} \sum_{i=1}^N(Y^N_i-g^*_c(X_i))
(g_N(X_i)-h_k(X_i))I(\|X_i\|<c)\cr
&+ \frac{1}{N} \sum_{i=1}^N(Y^N_i-g^*_c(X_i))
(h_k(X_i)-g^*_c(X_i))I(\|X_i\|<c)\cr
\leq{}& \sup_{x\in\mathcal{H}_c} |g_N(x)-h_k(x)|\frac{1}{N}
\sum_{i=1}^N|Y^N_i-g^*_c(X_i)|I(\|X_i\|<c)\cr
&+\max_{1\leq k\leq m}\frac{1}{N} \sum_{i=1}^N(Y^N_i-g^*_c(X_i))
(h_k(X_i)-g^*_c(X_i))I(\|X_i\|<c)\cr
\leq{}& \epsilon \cdot \Big(\frac{1}{N}
\sum_{i=1}^N|Y^N_i-g^*_c(X_i)|^2I(\|X_i\|<c)\Big)^{1/2}\cr
&+\max_{1\leq k\leq m}\frac{1}{N} \sum_{i=1}^N(Y^N_i-g^*_c(X_i))
(h_k(X_i)-g^*_c(X_i))I(\|X_i\|<c)}}
$$
Because 
$h_k$'s are bounded over $\mathcal{H}_c$, and $(Y^N_i)$ 
is
\textit{``strongly ergodic''},
$$
\displaylines{
\frac{1}{N} \sum_{i=1}^N(Y^N_i-g^*_c(X_i))
((h_k(X_i)-g^*_c(X_i)))I(\|X_i\|<c)\rightarrow
\hfill\cr\hfill
\E_\pi (Y-g^*_c(X))(h_k(X)-g^*_c(X))I(\|X\|<c)\leq 0}
  $$
as $N\rightarrow \infty$. Here, we use the fact that $g^*_c$ is the projection
of $Y$ to 
$\C_c$, and $h_k$ is also a bounded convex function over
$\mathcal{H}_c$. As a 
result,
$$
\displaylines{
       \lim_{N\rightarrow \infty} \frac{1}{N} \sum_{i=1}^N(Y^N_i-g^*_c(X_i))
       (g_N(X_i)-g^*_c(X_i))I(\|X_i\|<c) \leq 
\hfill\cr\hfill
       \epsilon \Big   (E_\pi
       |Y^N_i-g^*_c(X_i)|^2I(\|X_i\|<c)\Big)^{1/2}}
       $$
for every $\epsilon>0$. This ensures the lemma.
\end{proofof}

\begin{lemma} \label{truncated} Let $X= (X_t\colon t\geq 0)$ be a Harris
ergodic chain. Assume that $E_\pi r(X_t)^2<\infty$. Then the 
following two
random sequences are 
\textit{``strongly ergodic''}:
\begin{align*}
Y_t&= \sum_{j=t}^\infty \ea[(j-t)] r(X_{t})\\
\overline Y_t^N&= \sum_{j=t}^{2N} \ea[(j-t)] r(X_{t}).
\end{align*}
\begin{proof}
Since $X$ 
is a Harris ergodic chain, there exists a stationary process $\hat X$
initialized by invariant measure $\pi$ and a finite coupling time $T$ such
that $X_t=\hat X_t$ for all $t\geq T$; see 
Proposition 3.13 in \citet[Chapter
VII]{asmussen2003applied}. Let
$$
\hat Y_t= \sum_{j=t}^\infty \ea[(j-t)] r(\hat X_{t})
$$
for $t\geq 0$. For every $g\in \mathcal{L}_{\pi}^2$, we have
\[
\frac{1}{N} \sum_{t=1}^N (\hat Y_t-g(\hat X_t))^2I(\|\hat X_t\|\leq
c)\rightarrow\E_\pi(\hat Y-g(\hat X_t))^2I(\|X\|\leq c)\mbox{\ \ \ \ \ \
\ \ \  a.s.}
\]
by the 
Birkhoff--Khinchin 
theorem; see 
Corollary 6.23 in
\citet[p.115]{breiman1992probability}.
Moreover, we can easily show that $$\frac{1}{N} \sum_{t=1}^T \left |(\hat
Y_t-g(\hat X_t))^2-(Y_t-g(X_t))^2\right| I(\|\hat X_t\|\leq c)\rightarrow 0$$
as $N\rightarrow \infty$. Therefore, by using the fact that $X_t=\hat X_t$
for all $t\geq T$, we conclude that
\[
\frac{1}{N} \sum_{t=1}^N (Y_t-g(X_t))^2I(\| X_t\|\leq c)\rightarrow\E_\pi\left
[( Y-g(X_t))^2I(\|X\|\leq c)\right]\mbox{\ \ \ \ \ \ \ \ \  a.s.}
\]
as $N\rightarrow \infty$. Therefore, the sequence $(Y_t)$ is  
\textit{strongly
ergodic}.

We can also use this property to show $(\overline Y_t^N)$ is \textit{strongly
ergodic}. By applying the triangle inequality, we have
\begin{align}
&\left \vert \sqrt{\frac{1}{N} \sum_{t=1}^N (Y_t-g(X_t))^2I(\| X_t\|\leq c)}-
\sqrt{\frac{1}{N} \sum_{t=1}^N (\overline Y_t^N-g(X_t))^2I(\| X_t\|\leq c)}
\right \vert\nonumber \\
&\leq  \sqrt{\frac{1}{N} \sum_{t=1}^N (Y_t-\overline Y_t^N)^2}=
\sqrt{\frac{1}{N} \sum_{t=1}^N \left(\sum_{j=2N+1}^\infty  \ea[(j-t)]
r(X_j)\right)^2} \nonumber\\
&=\ea[(N+1)]\sqrt{\frac{1}{N} \sum_{t=1}^N  \ea[2(N-t)]}\cdot
\left(\sum_{j=2N+1}^\infty  \ea[(j-(2N+1))] r(X_j)  \right).\label{RHE}
\end{align}
We 
can show that the right hand side of (\ref{RHE}) converges to zero. It
is clear that $$\sqrt{\frac{1}{N} \sum_{t=1}^N	\ea[2(N-t)]}\rightarrow 0$$
as $N\rightarrow \infty$. Let
 \[
 U_N\df \sum_{j=2N+1}^\infty  \ea[(j-(2N+1))] r(X_j) .
 \]
According to the assumption that $X$ is a 
Harris ergodic chain and  Theorem
3.6 of \citet[Chapter VII]{asmussen2003applied}, we have
\[
\E[U_N|X_0=x]\rightarrow \E_\pi [V(X_t)]<\infty
\]
as $N\rightarrow \infty$. By using this fact and applying the 
Borel--Cantelli
Lemma, it is straightforward to show that $\ea[(N+1)]U_N$ goes to zero
almost surely.
Observe that
\begin{align*}
\sum_{N=1}^\infty \P_x(\ea[(N+1)]U_N\geq \epsilon)\leq \sum_{N=1}^\infty
\frac{\ea[(N+1)]}{\epsilon}\EE_x[U_N] <\infty
\end{align*}
for 
every $\epsilon>0$. Therefore, $\P_x(\ea[(N+1)]U_N>\epsilon\ ;\  i.o.)=0$,
and the right hand side of (\ref{RHE}) converges to zero almost surely
as $N\rightarrow \infty$. Hence, $(\overline Y_t^N)$ is \textit{strongly
ergodic}.
\end{proof}
\end{lemma}
\begin{lemma}\label{two-ergodic}Assume that $(X_t,\widetilde X_t)$ is a
\textit{two copy sample path} of a positive Harris recurrent chain, and
$E_\pi( r(X_t)^2+V(X_t)^2 )<\infty $. Let
\[
  H_t=\reward+\frac{1}{2} (V(X_{t+1})+V(\widetilde X_{t+1})).
\]
Then, $H=(H_t \ : \ t\geq 0)$ is a \textit{strongly ergodic} sequence.
\end{lemma}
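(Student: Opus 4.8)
The plan is to identify the limiting random variable explicitly and then reduce the problem to an ergodic theorem for a suitably augmented Harris chain, mirroring the argument of Lemma \ref{truncated}. Under stationarity the natural candidate for the limit is
\[
Y = r(X_0) + \tfrac{1}{2}\big(V(X_1) + V(\widetilde X_1)\big),
\]
where $X_0 \sim \pi$ and, given $X_0$, the variables $X_1$ and $\widetilde X_1$ are two independent draws from the transition kernel $P(X_0,\cdot)$. Since $\pi$ is invariant, both $X_1$ and $\widetilde X_1$ are marginally distributed as $\pi$, so the inequality $(a+b+c)^2 \le 3(a^2+b^2+c^2)$ together with the hypothesis $E_\pi(r(X_t)^2 + V(X_t)^2) < \infty$ yields $\E Y^2 \le 3\,\E_\pi r(X_0)^2 + \tfrac{3}{2}\,\E_\pi V(X_0)^2 < \infty$, which is exactly the integrability demanded by the definition of strong ergodicity.

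The core step is to realize the quantity of interest as an additive functional of a single Markov chain. I would set $W_t = (X_t, X_{t+1}, \widetilde X_{t+1})$ and observe that $(W_t \colon t\ge 1)$ is itself a positive Harris recurrent chain on the product space $\mathcal{X}^3$: given $W_t$, its next state draws $X_{t+2}$ and $\widetilde X_{t+2}$ independently from $P(X_{t+1},\cdot)$, and recurrence together with the existence of a unique stationary law are inherited from $X$ because the first coordinate of $W_t$ evolves exactly as the chain $X$. The map $\psi(x, x', \tilde x') = \big(r(x) + \tfrac12(V(x') + V(\tilde x')) - g(x)\big)^2 I(\|x\|\le c)$ is measurable, and $\big(H_t - g(X_t)\big)^2 I(\|X_t\|\le c) = \psi(W_t)$, so the empirical average in the definition of strong ergodicity is a Ces\`aro average of $\psi(W_t)$ along the chain $W$.

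To pass to the limit I would follow Lemma \ref{truncated} almost verbatim. By Harris ergodicity I couple $X$ with a stationary copy $\hat X$ and drive the auxiliary draws by common randomness, so that $W_t = \hat W_t$ for all $t$ beyond a finite coupling time $T$, where $\hat W_t = (\hat X_t, \hat X_{t+1}, \widetilde{\hat X}_{t+1})$ is the stationary version. Applying the Birkhoff--Khinchin theorem to the stationary ergodic sequence $(\hat W_t)$ gives
\[
\frac{1}{N}\sum_{t=1}^N \psi(\hat W_t) \longrightarrow \E_\pi\big(Y - g(X)\big)^2 I(\|X\|\le c)\qquad \text{a.s.},
\]
and the stationary expectation equals the target because $\hat W_0$ has precisely the law of $(X_0,X_1,\widetilde X_1)$ described above. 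Finally $\frac{1}{N}\sum_{t=1}^T |\psi(\hat W_t) - \psi(W_t)| \to 0$, since the discrepant terms are confined to $t \le T$, so the same limit holds for $\frac{1}{N}\sum_{t=1}^N \psi(W_t)$, establishing strong ergodicity of $H$.

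The main obstacle I anticipate is the careful handling of the auxiliary copies $\widetilde X_{t+1}$ in the coupling: one must extend the coupling of $X$ with $\hat X$ to the independent side-draws so that they too agree after time $T$, and one must confirm that the augmented chain $W$ is genuinely positive Harris recurrent (equivalently, that the stationary process $(\hat W_t)$ is ergodic) rather than merely stationary. Both points are routine once one notes that the auxiliary randomness is i.i.d.\ and independent of the driving chain, so that ergodicity of $X$ transfers to $W$; the moment bound and the negligibility of the first $T$ terms are then exactly as in Lemma \ref{truncated}.
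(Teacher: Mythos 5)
Your construction is the same as the paper's at its core: both proofs augment the process to the chain $\overline{X}_t=(X_t,X_{t+1},\widetilde X_{t+1})$ on $\mathcal{X}^3$ and realize the empirical average as an additive functional of that chain. Where you diverge is in how the ergodic theorem is licensed. The paper does \emph{not} route through the coupling argument of Lemma \ref{truncated}; instead it verifies positive Harris recurrence of $\overline{X}$ directly, by taking a regeneration set $R$ for $X$ with minorization $\P^r(x,\cdot)\geq\epsilon\lambda(\cdot)$ on $R$, lifting it to $\overline{R}=\mathcal{X}\times R\times\mathcal{X}$, and checking the $(r+1)$-step minorization $\P^{r+1}((x,y,z),B_1\times B_2\times B_3)\geq\epsilon\int_{w\in B_1}\lambda(dw)\P(w,B_2)\P(w,B_3)$, after which the invariant law $\bar\pi(dx,dy,dz)=\pi(dx)\P(x,dy)\P(x,dz)$ and the SLLN for positive Harris chains (Meyn--Tweedie) finish the proof in one stroke. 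Your alternative --- couple $X$ with a stationary copy, extend the coupling to the side-draws, and apply Birkhoff--Khinchin --- is also workable, but note that the two issues you flag as ``routine'' (extending the coupling to the auxiliary draws so they agree after time $T$, and ergodicity rather than mere stationarity of $(\hat W_t)$) are precisely the substantive content of the lemma; the paper's explicit minorization disposes of both simultaneously, since a Harris chain's invariant law is automatically ergodic and no coupling of the side-draws is needed. If you keep your route, you should actually carry out the lifted minorization (or an equivalent one-dependence/regeneration argument) rather than assert that ergodicity ``transfers''; your moment bound for $\E Y^2$ and the treatment of the initial segment $t\leq T$ are fine.
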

\begin{proof}
The proof is based on constructing a positive Harris recurrent chain for 
the
\textit{two copy sample path} and using the ergodic property.
By the assumption, the Markov chain $X$ is positive Harris
recurrent. Therefore, there exists a regeneration set $R$ such that:
\begin{enumerate}[i.)]
\item Letting $\tau_R=\inf\{t\geq 0 : X_t\in R\}$, we have $\P_x\left
(\tau_R<\infty\right)=1$ for all $x\in \mathcal{X}$.
\item For some $r>0$, $1>\epsilon >0$ 
and some probability measure $\lambda$
on $\mathcal{X}$,
$$
\P^r(x,B)\geq \epsilon \lambda (B), \  \   \  \  \  x \in R
$$
for 
all $B \in \mathcal {F}$; see \citet[p.198]{asmussen2003applied}.
\end{enumerate}
Define the Markov chain $\overline{X}=\left ( (X_t,X_{t+1},\widetilde
X_{t+1})\ : \ t\geq0\right)$ over the state space $\mathcal{X}\times
\mathcal{X}\times \mathcal{X}$.
The transition probability of $\overline{X}$ is induced by the structure
of the 
\textit{two copy sample path}:
\[
\P\left(\overline{X}_{t+1}\in B_1\times B_2\times B_3\mid
\overline{X}_{t}=(x,y,z)\right)=
 \left\{
  \begin{array}{l l}
      \P(y,B_2) \P(y,B_3) & \quad \text{if $y \in B_1$,}\\
 0 & \quad \text{if $y\notin B_1$.}
  \end{array} \right.
\]
We 
show that this Markov chain is positive Harris recurrent. Let
 $$\overline{R}\df \mathcal{X}\times R\times \mathcal{X}.$$
 It is clear that the first hitting time of $\overline{R}$ is almost
 surely 
finite. Moreover, we have
 \begin{align*}
 \P^{r+1}\left((x,y,z), B_1\times B_2\times B_3\right)&=\int_{w\in B_1}
 \P^{r} (y,dw) \P(w,B_2)    \P(w,B_3)	\\
& \geq \epsilon \int_{w\in B_1} \lambda(dw) \P(w,B_2)	 \P(w,B_3)\\
&=\epsilon \bar\lambda(B_1,B_2,B_3)
 \end{align*}
for 
the measurable sets $B_1,B_2,B_3$ and $(x,y,z)\in\overline{R}$. So
the measure $\lambda(\cdot)$ is a common component for the regenerative
set $\overline{R}$. Therefore, the Markov chain $\overline{X}$
is positive Harris recurrent. Moreover, it is easy to show that $\bar
\pi(dx)=\pi (dx) \P(x,dy)\P(x,dz)$ is the invariance probability for $X$.

Thus, $H=(H_t \ : \ t\geq 0)$ is a \textit{strongly ergodic} sequence as
a direct result of the strong 
law of large numbers 
for positive Harris recurrent
chains; see 
\citet[Chapter 17, p.416]{meyn2009markov}.
\end{proof}

\lemuniform*
\begin{proof}
The proof is based on covering $\bar \C_f $ by a finite set of
functions. Then, we can apply the ergodic property of the Markov chain over
each member of that set.

 For every constant $c$, 
and $\delta$,
 By 
Assumption (\ref{A-bound}), the functions $\phi \in \bar \C_f $ are
 convex bounded and Lipschitz  over the set $\mathcal{H}_{c}$; see, for
 example, \citet[Page. 165]{vandervaart}.
  It follows that for every $\epsilon> 0$, there exists a finite collection
  of bounded functions $\Gamma=\{h_1, h_2,\ldots, h_m\}$ 
that is an
  $\epsilon$-net for $\bar \C_f $. This 
means for every $\phi \in \bar \C_f$
   there 
exits some $h_k$ such that
$\sup_{x \in  \mathcal{H}_{c}} |\phi(x)-h_k(x)|\leq \epsilon$; see Theorem
6 of \citet{Bronshtein},
and 
for a recent result, see 
\citet{Guntuboyina2012}.

 For every $\phi_1,\phi_2 \in \bar \C_f $, suppose $h_i, h_j\in\Gamma$	are
 in their $\epsilon$ neighborhoods, correspondingly. Let $\Delta \phi=\phi_1
 -\phi_2$, and	$\Delta h=h_i -h_j$. Then, we have
 \begin{align*}
\|\Delta \phi \|_\enorm & \leq \|(\Delta \phi -\Delta h) I(\|X\|<c)\|_\enorm
+ \|\Delta h I(\|X\|<c)\|_\enorm +\|\Delta \phi I(\|X\|>c)\|_\enorm\\
&\leq 2\epsilon+\|\Delta h I(\|X\|<c)\|_\enorm+2\tilde K
\Big(\frac{1}{N}\sum_{t=0}^{N-1}  I(\|X\|>c)\Big)^{1/2}.
 \end{align*}
 Here, we are using the 
Assumption (\ref{A-bound}) that
 $|\phi_1(x)-\phi_2(x)|\leq \tilde K$ 
and $|\Delta \phi(x)-\Delta
 h(x)|<2\epsilon$ for all $\|x\|<c$. Similarly, we have
 \begin{align*}
\|\Delta \phi \|_\pi & \geq \|(\Delta \phi -\Delta h) I(\|X\|<c)\|_\pi -
\|\Delta h I(\|X\|<c)\|_\pi -\|\Delta \phi I(\|X\|>c)\|_\pi\\
&\geq \|\Delta h I(\|X\|<c)\|_\pi-2\epsilon-2\tilde K \Big(\E_{\pi}
I(\|X\|>c)\Big)^{1/2}.
 \end{align*}
 For 
sufficiently large $c$ 
and $N$, we get
$$2K \Big(\frac{1}{N}\sum_{t=0}^{N-1}  I(\|X\|>c)\Big)^{1/2}+2\tilde K
\Big(\E_{\pi}  I(\|X\|>c)\Big)^{1/2}\leq \epsilon.$$
Therefore,
  \begin{align*}
\|\Delta \phi \|_\enorm-\|\Delta \phi \|_\pi & \leq \Big | \|\Delta h
I(\|X\|<c)\|_\enorm- \|\Delta h I(\|X\|<c)\|_\pi \Big |+5\epsilon.
 \end{align*}
A similar argument shows that the right-hand side is also an upper for
$\|\Delta \phi \|_\pi -\|\Delta \phi \|_\enorm$. To summarize, we obtain
\begin{align*}
\Big |\|\Delta \phi \|_\enorm-\|\Delta \phi \|_\pi \Big |\leq
\max_{1\leq i,j\leq m} & \Big |\frac{1}{N}\sum_{t=0}^{N-1}
(h_i(X_t)-h_j(X_t))^2I(\|X_t\|\leq c)\\
&- \E_\pi (h_i(X_t)-h_j(X_t))^2I(\|X_t\|\leq c)\Big|^{1/2}+5\epsilon
 \end{align*}
for every $\epsilon>0$, and 
sufficiently large $c$ 
and $N$. Positive
Harris recurrent assumption of the Markov chain ensures that the second
term converges to zero almost surely. Thus,
 \begin{align*}
 \limsup_{N\rightarrow \infty} \Big |\|\Delta \phi \|_\enorm-\|\Delta \phi
 \|_\pi \Big |&\leq 5\epsilon
 \end{align*}
 for 
any arbitrarily small $\epsilon$. Thus, (\ref{uniform_con}) holds.
\end{proof}
\begin{lemma}\label{V*-con} We have
\[
\lim_{N\rightarrow \infty} \|\overline V-\tilde V_N\|_{\tilde \pi_N}=0.
\]
\end{lemma}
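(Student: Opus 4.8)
The plan is to recognize $\widetilde V_N$ as the \emph{empirical} projection of the sequence $\widetilde H=(\widetilde H_t)$ onto the closed convex set $\bar\C_f$, to recognize $\overline V$ as the corresponding \emph{population} projection, and then to run the consistency argument of Theorem~\ref{consistance} with the cone $\C$ replaced by the set $\bar\C_f$. Because Assumption~\ref{A-bound} makes the state space bounded and forces every $\phi\in\bar\C_f$ to be uniformly bounded and uniformly Lipschitz, the tail truncations $I(\|X\|>c)$ that complicate Theorem~\ref{consistance} are unnecessary here. First I would identify the population object. By Lemma~\ref{two-ergodic} the sequence $\widetilde H$ is \textit{strongly ergodic}, and conditioning on $X_t$ gives $\E[\widetilde H_t\mid X_t]=T\overline V(X_t)$. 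Minimizing $\E_\pi(\widetilde H_t-\phi(X_t))^2$ over $\phi$ is therefore the same as projecting the conditional mean $T\overline V$; by Proposition~\ref{p_2copy} this minimizer over $\C$ is $\Proj_\C T\overline V$, which equals $\overline V$ by the fixed-point identity of Theorem~\ref{ideal_fixed}. Since $\overline V\in\bar\C_f$ by hypothesis and $\bar\C_f\subset\C$, the characterization $\langle \widetilde H-\overline V,\phi-\overline V\rangle_\pi\le 0$ holds for every $\phi\in\C$, hence \emph{a fortiori} for every $\phi\in\bar\C_f$; thus $\overline V$ is also the population projection of $\widetilde H$ onto $\bar\C_f$.

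Next I would reproduce the basic inequality. Using the empirical optimality of $\widetilde V_N$ in \eqref{tilde_V} with the competitor $\overline V\in\bar\C_f$ and expanding the square exactly as in \eqref{main}, one obtains
\[
\|\widetilde V_N-\overline V\|_{\tilde\pi_N}^2\le \frac{2}{N}\sum_{t=0}^{N-1}(\widetilde H_t-\overline V(X_t))\,(\widetilde V_N(X_t)-\overline V(X_t)).
\]
It then remains to show the right-hand side tends to $0$. I would split $\widetilde H_t-\overline V(X_t)$ into the mean-zero noise part $\widetilde H_t-T\overline V(X_t)$ and the bias part $T\overline V(X_t)-\overline V(X_t)$. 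For a \emph{fixed} convex function $\phi\in\bar\C_f$, the ergodic theorem for the two-copy Harris chain $\overline X=((X_t,X_{t+1},\widetilde X_{t+1}))$ built in Lemma~\ref{two-ergodic} shows that the noise inner product converges to $\E_\pi[(H-T\overline V(X))(\phi(X)-\overline V(X))]=0$ (by conditioning on $X_t$), while the bias inner product converges to $\E_\pi[(T\overline V(X)-\overline V(X))(\phi(X)-\overline V(X))]\le0$ by the projection property established in the first paragraph.

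The main obstacle is that $\widetilde V_N$ is \emph{data dependent}, so the ergodic theorem cannot be applied to it directly. I would resolve this by reusing the covering machinery of Lemma~\ref{uniform_con}: for each $\epsilon>0$ there is a finite $\epsilon$-net $h_1,\dots,h_m$ of $\bar\C_f$ (Theorem~6 of \citet{Bronshtein}) such that, for all large $N$, some $h_k$ satisfies $\sup_{x}|\widetilde V_N(x)-h_k(x)|\le\epsilon$ over the (bounded) state space. Replacing $\widetilde V_N$ by $h_k$ inside the inner product above—precisely the device used in the proof of Lemma~\ref{restricted}—bounds the right-hand side by $\max_{1\le k\le m}$ of the two fixed-function inner products (each converging to a quantity $\le0$ by the previous paragraph) plus an error of order $\epsilon\,\bigl(\frac1N\sum_{t}(\widetilde H_t-\overline V(X_t))^2\bigr)^{1/2}$, where the trailing factor is uniformly bounded by the \textit{strongly ergodic} property. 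Letting $N\to\infty$ and then $\epsilon\downarrow0$ forces the right-hand side to $0$, so $\|\widetilde V_N-\overline V\|_{\tilde\pi_N}^2\to0$, which is the claim.
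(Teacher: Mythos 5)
Your argument is correct, but it is not the route the paper takes. You re-run the consistency machinery of Theorem \ref{consistance} directly for the restricted set $\bar \C_f$: identify $\overline V$ as the population projection of $\widetilde H$ onto $\bar \C_f$, write the basic inequality, split the residual into a conditionally mean-zero part and a bias part, and handle the data-dependence of $\widetilde V_N$ with the Bronshtein $\epsilon$-net. The paper instead reduces to Theorem \ref{consistance} as a black box: it introduces the auxiliary empirical projection $W_N$ of $\widetilde H$ onto the full cone $\C$, notes that $\|\widetilde H-W_N\|_{\tilde \pi_N}\le\|\widetilde H-\widetilde V_N\|_{\tilde \pi_N}$ because $\bar \C_f\subset\C$, combines this with the strengthened projection inequality $\|\overline V-\widetilde V_N\|_{\tilde \pi_N}^2\le\|\widetilde H-\overline V\|_{\tilde \pi_N}^2-\|\widetilde H-\widetilde V_N\|_{\tilde \pi_N}^2$ (valid since $\overline V\in\bar \C_f$ and $\widetilde V_N$ is the empirical projection onto that convex set), and then expands and applies Cauchy--Schwarz to bound everything by multiples of $\|\overline V-W_N\|_{\tilde \pi_N}$, which Theorem \ref{consistance} sends to zero. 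The paper's route is shorter because it recycles the already-proved cone result and needs no new covering argument; your route is more self-contained and makes explicit why $\overline V$ is also the population projection onto the smaller set, a fact the paper uses only implicitly through $\overline V\in\bar \C_f$. Two small points: your basic inequality actually holds with constant $1$ rather than $2$ (the factor $2$ is harmless), and when you apply the projection property to the net functions $h_k$ you should note, as in the proof of Lemma \ref{restricted}, that $h_k$ need only be $\epsilon$-close to a member of $\bar \C_f$, which perturbs the limiting inner product by a term of order $\epsilon$ that vanishes when you send $\epsilon$ to zero at the end.
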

\begin{proof}
   Let $W_N$ be the projection of $\tilde H$ onto the convex cone $\C$. 
Recall 
that $\tilde V_N$ is the projection of $\tilde H$ to the convex
   set $\bar \C_f $.  Since, $\bar \C_f $ is a subset of $\C$, we have
  $$\|\tilde H-W_N\|_{\tilde \pi_N} \leq \|\tilde H-\tilde V_N\|_{\tilde
  \pi_N}.$$
  By Assumption 
(\ref {A-bound}), $\overline V$ belongs to $\bar \C_f
  $. This ensures that
     \begin{align*}
    \|\overline V-\tilde V_N\|_{\tilde \pi_N}^2&\leq \|\tilde H-\overline
    V\|_{\tilde \pi_N}^2-\|\tilde H -\tilde V_N\|_{\tilde \pi_N}^2\\
    &\leq \|\tilde H-\overline V\|_{\tilde \pi_N}^2-\|\tilde H -W_N\|_{\tilde
    \pi_N}^2\\
    &=\|\overline V-W_N\|_{\tilde \pi_N}^2+2\langle \overline V-W_N,
    W_N-\tilde H\rangle_{\tilde \pi_N}\\
    &\leq \| \overline V-W_N\|_{\tilde \pi_N}^2+2\|\overline V-W_N\|_{\tilde
    \pi_N} \|W_N-\tilde H\|_{\tilde \pi_N}\\
    &\leq 3\| \overline V-W_N\|_{\tilde \pi_N}^2+2\|\overline V-W_N\|_{\tilde
    \pi_N} \|\overline V-\tilde H\|_{\tilde \pi_N}.
  \end{align*}
  Theorem 
\ref{consistance} implies that $\| \overline V-W_N\|_{\tilde
  \pi_N}^2$ converges to zero. Therefore, $ \|\overline V-\tilde V_N\|_{\tilde
  \pi_N}^2$ also converges to zero as $N\rightarrow \infty$.
  \end{proof}
  \section{Examples}
  \begin{example}\label{contour}
Consider the line $L=\{(x,y)\mid 16 y=x\}$ as a convex set, and project the
points $x_1=(1,2)$ and $x_2=( -2,-2)$ onto this convex set with respect to the
sup-norm. It can be easily shown that $$\|\Pi_L x_1 -\Pi_L x_2\|_\infty=6.58>
\|x_1-x_2\|_\infty=4.$$
\end{example}

\begin{example}\label{unbounded}
Let $S=(X_1,X_2,\widetilde X_2,\ldots, X_N ,\widetilde X_N)$ be a sequence
of $2N-1$ normal random variables. Suppose that $\widetilde X_m, X_k$
are the largest and the second largest numbers in this sequence.
For any large enough $M$, let
\begin{align*}
    g_M(x) \df \left\{
  \begin{array}{l l}
    1 & \quad x\leq X_k\\
    (M-1) \frac{Y-X_k}{\tilde X_m- X_k}+1& \quad x> X_k.
  \end{array} \right.
   \end{align*}
  Clearly, 
the function $ g_M$ is convex, and we 
have
   \begin{align*}
   &\frac{1}{N}\sum_{i=1}^{N-1} (g_M(X_i)-\alpha g_M(\tilde
   X_{i+1}))(g_M(X_i)-\alpha g(X_{i+1}))\\
   &=\frac{(N-1)(1-\alpha)^2}{N}+\frac{1-\alpha}{N}(1-\alpha M) \rightarrow
   -\infty
   \end{align*}
as $M\rightarrow \infty$. Therefore, minimization problem (\ref{UBOP})
is unbounded with a 
probability of at least $1/4$ for any large $N$.
\end{example}

  \end{document}